\newtheorem{thm}{Theorem}[section]
\newtheorem{lem}[thm]{Lemma}
\newtheorem{remark}[thm]{Remark}
\numberwithin{equation}{section}
\journal{}
\begin{document}
\begin{spacing}{1.15}
\begin{frontmatter}
\title{The Laplacian spectral moments of power hypergraphs}

\author{Jueru Liu}
\author{Lixiang Chen}
\author{Changjiang Bu}\ead{buchangjiang@hrbeu.edu.cn}
\address{College of Mathematical Sciences, Harbin Engineering University, Harbin 150001, PR China}

\begin{abstract}
The $d$-th order Laplacian spectral moment of a $k$-uniform hypergraph is the sum of the $d$-th powers of all eigenvalues of its Laplacian tensor. In this paper, we obtain some expressions of the Laplacian spectral moments for $k$-uniform power hypergraphs, and these expressions can be represented by some parameters of graphs. And we show that some graphs can be determined by their high-order Laplacian spectrum by using the Laplacian spectral moments of power hypergraphs.
\end{abstract}

\begin{keyword}
hypergraph, spectral moment, Laplacian tensor, trace.\\
\emph{AMS classification (2020):}
05C50, 05C65, 15A69.
\end{keyword}
\end{frontmatter}

\section{Introduction}
For a $k$-uniform hypergraph $\mathcal{H}$, the $d$-th order (Laplacian) spectral moment of $\mathcal{H}$ is equal to the sum of the $d$-th powers of all eigenvalues of its adjacency (Laplacian) tensor. Since the $d$-th order trace of a tensor is equal to the sum of the $d$-th powers of its all eigenvalues \cite{ref7}, the $d$-th order (Laplacian) spectral moment of $\mathcal{H}$ is equal to the $d$-th order trace of its adjacency (Laplacian) tensor.

In 2013, Shao et al. \cite{ref8} gave a formula for the trace of tensors in terms of some graph parameters. The coefficients of characteristic polynomial and topological index of hypergraphs can be studied by spectral moments of hypergraphs \cite{ref9, ref10, ref11, ref22}. In 2021, Clark and Cooper \cite{ref10} expressed the spectral moments of hypergraph by the number of Veblen multi-hypergraphs and obtained the Harary-Sachs coefficient theorem for hypergraph. A formula for the spectral moment of a hypertree was given in terms of the number of some subhypertrees \cite{ref11}, and some high-order cospectral invariants of trees were given by the spectral moments of hypertrees \cite{ref16}. In \cite{ref22}, the Estrada index and subgraph centrality of uniform hypergraphs were studied, which are closely related to the traces of the adjacency tensor.

For Laplacian spectral moments of hypergraphs, the expressions of the first $k$ orders traces of the Laplacian tensors were given by the degree sequence of $k$-uniform hypergraphs \cite{ref9}. And an expression of the $k+1$-st order trace of Laplacian tensor of $k$-uniform hypergraphs was given in \cite{ref12}.

In this paper, we study Laplacian spectral moments of power hypergraphs. The expressions of the first $2k$ orders Laplacian spectral moments of $k$-uniform power hypergraphs are given, which can be represented by some parameters of graphs. And we show that some graphs, which are not determined by (signless) Laplacian spectrum, can be determined by their high-order (signless) Laplacian spectrum by considering the (signless) Laplacian spectral moments of power hypergraphs.

\section{Preliminaries}
Next, we introduce some notations and concepts for tensors and hypergraphs. For a positive integer $n$, let $[n]=\{1,2,\ldots,n\}$ and $[n]^k=\{i_1i_2\cdots i_k|i_j\in[n],j=1,\ldots,k\}$. A $k$-order $n$-dimension complex \textit{tensor} $\mathcal{T}=(t_{i\alpha})$ is a multi-dimensional array with $n^{k}$ entries on complex number field $\mathbb{C}$, where $i\in[n]$ and $\alpha\in[n]^{k-1}$.

A \textit{hypergraph} $\mathcal{H}=(V,E)$ consists of vertex set $V=\{1,2,\ldots,n\}$ and edge set $E=\{e_{1},e_{2},\ldots,e_{m}\}$, where $e_{j}\subseteq V(\mathcal{H})$ for $j\in[m]$. If $|e_{j}|=k$ for each $j\in[m]$ and $k\ge2$, then $\mathcal{H}$ is called a \textit{k-uniform} hypergraph. For a $k$-uniform hypergraph $\mathcal{H}$ with $n$ vertices, its \textit{adjacency tensor} $\mathcal{A}_{\mathcal{H}}=(a_{i\alpha})$ is a $k$-order $n$-dimension tensor has entries
		$$a_{i\alpha}=\left\{\begin{array}{ll}
			\frac{1}{(k-1)!},&  \textnormal{if}\ \{i,i_2,\ldots,i_k\}\in E(\mathcal{H})\ \textnormal{for}\ \alpha=i_2\cdots i_k,\\
			0,& \textnormal{otherwise}.
		\end{array}\right.$$
The spectrum and eigenvalues of $\mathcal{A}_{\mathcal{H}}$ is called the spectrum and eigenvalues of $\mathcal{H}$, repectively \cite{ref3}. For a vertex $i\in V(\mathcal{H})$, the \textit{degree} of $i$ is the number of edges of $\mathcal{H}$ containing the vertex $i$, denoted by $d_{i}$. The \textit{degree tensor} $\mathcal{D}_{\mathcal{H}}=\textnormal{diag}(d_1,\ldots,d_n)$ of $\mathcal{H}$ is a $k$-order $n$-dimension diagonal tensor. And tensor $\mathcal{L}_{\mathcal{H}}=\mathcal{D}_{\mathcal{H}}-\mathcal{A}_{\mathcal{H}}$ is the \textit{Laplacian tensor} of $\mathcal{H}$ \cite{ref5}.

In 2005, Lim \cite{ref1} and Qi \cite{ref2} introduced the eigenvalues of tensors independently. Denote the set of $n$-dimension complex vectors and the set of $k$-order $n$-dimension complex tensors by $\mathbb{C}^{n}$ and $\mathbb{C}^{[k,n]}$, respectively. For a tensor $\mathcal{T}=(t_{i\alpha})\in\mathbb{C}^{[k,n]}$ and $x=(x_{1},\ldots,x_{n})^{\mathsf{ T}}\in\mathbb{C}^{n}$, $\mathcal{T}x^{k-1}$ is a vector in $\mathbb{C}^{n}$ whose $i$-th component is $$(\mathcal{T}x^{k-1})_{i}=\sum\limits_{\alpha\in[n]^{k-1}}t_{i\alpha}x^{\alpha},$$ where $x^{\alpha}=x_{i_1}\cdots x_{i_{k-1}}$ if $\alpha=i_2\cdots i_{k-1}$. For a complex number $\lambda\in\mathbb{C}$, if there is a vector $x\in \mathbb{C}^n\setminus\{0\}$ such that $$\mathcal{T}x^{k-1}=\lambda x^{[k-1]},$$ then $\lambda$ is called an \textit{eigenvalue} of $\mathcal{T}$ and $x$ is an \textit{eigenvector} of $\mathcal{T}$ associated with $\lambda$, where $x^{[k-1]}=(x_{1}^{k-1},\ldots,x_{n}^{k-1})^{\mathsf{T}}$. The multi-set of all eigenvalues of tensor $\mathcal{T}$ is the \textit{spectrum} of $\mathcal{T}$, denoted by $\sigma(\mathcal{T})$.
	
In \cite{ref6}, an expression of $d$-th order trace for tensors is given. And Hu et al. \cite{ref7} proved that the $d$-th order trace of a $k$-order $n$-dimension tensor $\mathcal{T}$ is equal to the sum of the $d$-th powers of its all eigenvalues, that is, $\textnormal{Tr}_d(\mathcal{T})=\sum\nolimits_{\lambda\in\sigma(\mathcal{T})}\lambda^d$.

In 2013, Shao et al. \cite{ref8} gave a formula for $\textnormal{Tr}_d(\mathcal{T})$. Next, we introduce some related notations. For a positive integer $d$, let $$\mathcal{F}_{d}=\{(i_{1}\alpha_{1},\ldots,i_{d}\alpha_{d})|\ 1\le i_{1}\le\cdots\le i_{d}\le n;\alpha_{1},\ldots,\alpha_{d}\in[n]^{k-1}\}.$$ For $f=(i_{1}\alpha_{1},\ldots,i_{d}\alpha_{d})\in\mathcal{F}_{d}$ and a $k$-order $n$-dimension tensor $\mathcal{T}=(t_{i\alpha})$, let $\pi_{f}(\mathcal{T})=\prod\nolimits_{j=1}^{d} t_{i_j\alpha_j}$. Suppose $i_{j}\alpha_{j}=i_{j}v_{1}^{(j)}\cdots v_{k-1}^{(j)}$, let $E_{j}(f)=\{(i_{j},v_{1}^{(j)}),\ldots,(i_{j},v_{k-1}^{(j)})\}$ be the set of arcs from $i_{j}$ to $v_{1}^{(j)},\ldots,v_{k-1}^{(j)}$ and $E(f)=\bigcup\nolimits_{j=1}^{d} E_{j}(f)$ be an arc multi-set. Let $V_{j}(f)=\{i_{j},v_{1}^{(j)},\ldots,v_{k-1}^{(j)}\}$ and $V(f)=\bigcup\nolimits_{j=1}^{d} V_{j}(f)$ be a vertex set. Let multi-digraph $D(f)=(V(f),E(f))$. Let $b(f)$ be the product of the factorials of the multiplicities of all the arcs in $D(f)$. Let $c(f)$ be the product of the factorials of the outdegrees of all the vertices in $D(f)$. Let $W(f)$ be the set of all closed walks with the arc multi-set $E(f)$. In this paper, if a multi-set $\mathrm{A}$ contains $m$ distinct elements $a_{1},\ldots,a_{m}$ with multiplicities $r_{1},\ldots,r_{m}$ respectively, then we write $\mathrm{A}=\{a_{1}^{r_{1}},\ldots,a_{m}^{r_{m}}\}$.
	
The formula for the $d$-th order trace of tensors given by Shao et al. is shown as follows.
	
	\begin{lem}\textnormal{\cite{ref8}}
		\textnormal{Let $\mathcal{T}=(t_{i\alpha})$ be a $k$-order $n$-dimension tensor. Then}
		\begin{equation}
			\textnormal{Tr}_{d}(\mathcal{T})=(k-1)^{n-1} \sum_{f \in \mathcal{F}_{d}} \frac{b(f)}{c(f)} \pi_{f}(\mathcal{T})|W(f)|.
		\end{equation}
	\end{lem}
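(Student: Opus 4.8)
The plan is to derive the formula from the analytic expression for $\textnormal{Tr}_d(\mathcal{T})$ established in \cite{ref6} and to reorganize it according to the combinatorial type of the associated multi-digraph. The Morozov--Shakirov type formula of \cite{ref6} writes $\textnormal{Tr}_d(\mathcal{T})$, up to the overall normalization $(k-1)^{n-1}$, as a sum over all ways of selecting $d$ entries $t_{i_1\alpha_1},\ldots,t_{i_d\alpha_d}$ of $\mathcal{T}$, weighted by the product $\prod_{j=1}^d t_{i_j\alpha_j}$ and a purely combinatorial coefficient that records how the row indices $i_1,\ldots,i_d$ and the column multi-indices $\alpha_1,\ldots,\alpha_d$ interlock. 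First I would make this selection explicit in the language already set up in the excerpt: reading each entry $t_{i\alpha}$ with $\alpha=v_1\cdots v_{k-1}$ as the $k-1$ arcs $(i,v_1),\ldots,(i,v_{k-1})$ leaving $i$, a choice of $d$ entries indexed by $f\in\mathcal{F}_d$ produces exactly the arc multiset $E(f)$, the multi-digraph $D(f)$, and the weight $\pi_f(\mathcal{T})$.

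The second step is to collect the terms of this raw sum according to the arc multiset $E(f)$. The governing observation is that a configuration contributes to $\textnormal{Tr}_d$ only when $E(f)$ can be traversed by a single closed walk, i.e. when $D(f)$ is connected and balanced (in-degree equals out-degree at every vertex); for all other configurations the combinatorial coefficient collapses to zero, which is automatically encoded by $W(f)=\emptyset$. For the surviving configurations I would show that the coefficient factors as $b(f)/c(f)$ times the number $|W(f)|$ of closed walks realizing $E(f)$: the factor $c(f)$, the product of the outdegree factorials, comes from the ways the arcs leaving each vertex may be ordered when passing from an index sequence to the unordered digraph, while $b(f)$, the product of the factorials of the arc multiplicities, corrects for identifying repeated (parallel) arcs. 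Establishing this identity amounts to a careful orbit-counting argument comparing labelled index sequences with their underlying digraph.

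The enumerative heart of the argument is the count $|W(f)|$ of closed walks with prescribed arc multiset, which is exactly the number of Eulerian circuits of $D(f)$ and is therefore governed by the BEST theorem (the directed matrix--tree theorem): in a connected balanced multi-digraph the number of Eulerian circuits equals the number of spanning arborescences rooted at a fixed vertex times $\prod_{v}(\deg^{+}(v)-1)!$. I would feed this enumeration into the coefficient produced by Step~2 and verify that, after summing over all $f\in\mathcal{F}_d$, the overall expression equals $(k-1)^{n-1}\sum_{f\in\mathcal{F}_d}\frac{b(f)}{c(f)}\pi_f(\mathcal{T})|W(f)|$.

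I expect the bookkeeping in Steps~2 and~3 to be the main obstacle: matching the analytic coefficients of \cite{ref6} with the product $\frac{b(f)}{c(f)}|W(f)|$ exactly, with neither over- nor under-counting. The two delicate points are (i) proving that disconnected or unbalanced digraphs contribute nothing, so that only the balanced (Veblen-type) configurations survive, and (ii) confirming that the parallel-arc correction $b(f)$ and the outdegree correction $c(f)$ enter in precisely the stated ratio rather than as isolated factors. A convenient sanity check throughout is the matrix case $k=2$, where $(k-1)^{n-1}=1$ and the statement must reduce to the classical identity $\textnormal{tr}(A^d)=\sum_{W}\prod_{e\in W}a_e$, the sum being over closed walks $W$ of length $d$; forcing the general bookkeeping to specialize correctly there should pin down the exact form of every combinatorial factor.
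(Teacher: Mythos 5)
The paper does not prove this statement: Lemma~2.1 is quoted verbatim from Shao et al.\ \cite{ref8}, so there is no in-paper argument to compare against. Judged on its own terms, your sketch follows essentially the strategy of the original proof in \cite{ref8}: start from the $d$-th trace formula of \cite{ref6}, expand it as a sum over selections of $d$ entries indexed by $f\in\mathcal{F}_d$, and regroup according to the arc multiset $E(f)$, with $b(f)$ arising from the derivative calculus on repeated arcs and $c(f)$ from the outdegree normalizations. Your identification of the delicate points (the vanishing of non-Eulerian configurations, and the exact form of the ratio $b(f)/c(f)$) is accurate, and the $k=2$ sanity check is a sensible control.

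One step in your plan is misplaced, however. The BEST theorem is not needed to prove this lemma and its use as described is circular: in the statement, $|W(f)|$ is a primitive quantity (the number of closed walks whose arc multiset is $E(f)$), so the whole task is to show that the coefficient of $\pi_f(\mathcal{T})$ coming from \cite{ref6} equals $\tfrac{b(f)}{c(f)}|W(f)|$. Computing $|W(f)|$ via spanning arborescences and then ``feeding this back'' into an expression that is again written in terms of $|W(f)|$ accomplishes nothing; indeed, in the present paper BEST enters only later, in the proof of Theorem~3.1, precisely in order to eliminate $|W(f)|$ in favour of $\tau(f)$. Likewise, no separate argument is needed to discard disconnected or unbalanced digraphs, since $|W(f)|=0$ already annihilates those terms, as you note. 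What genuinely remains open in your sketch is the orbit-counting identity matching index sequences in $\mathcal{F}_d$ against closed walks, which is where the ratio $b(f)/c(f)$ is actually produced; that computation is the substance of the proof in \cite{ref8} and is only announced, not carried out, in your proposal.
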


Since the $d$-th order Laplacian spectral moment of $\mathcal{H}$ is equal to the $d$-th order trace of its Laplacian tensor, we study the Laplacian spectral moment of uniform hypergraphs by considering the trace formula of tensor given by Shao et al.

For a $k$-uniform hypergraph $\mathcal{H}$ with $n$ vertices, let $\mathcal{L}_{\mathcal{H}}$ be the Laplacian tensor of $\mathcal{H}$. When $\mathcal{T}=\mathcal{L}_{\mathcal{H}}$ in Eq.(2.1), the $d$-th order Laplacian spectral moment of $\mathcal{H}$ is
	\begin{equation}
		 \textnormal{Tr}_{d}(\mathcal{L}_{\mathcal{H}})=(k-1)^{n-1}\sum_{f\in \mathcal{F}_{d}}\frac{b(f)}{c(f)}\pi_{f}(\mathcal{L}_{\mathcal{H}})|W(f)|.
	\end{equation}

Next, we simplify Eq.(2.2) by classifying $f$ and introduce some related concepts. For $i_{j}\alpha_{j}\in[n]^{k}$ and a $k$-order $n$-dimension tensor $\mathcal{T}=(t_{i\alpha})$, the entry $t_{i_{j}\alpha_{j}}$ in tensor $\mathcal{T}$ is called the corresponding entry of $i_{j}\alpha_{j}$. Suppose $\alpha_{j}=v_1^{(j)}\cdots v_{k-1}^{(j)}$, for a $k$-uniform hypergraph $\mathcal{H}$, $e=\{i_j,v_1^{(j)},\ldots,v_{k-1}^{(j)}\}$ is called the corresponding edge of tuple $i_{j}\alpha_{j}$ if the corresponding entry of $i_{j}\alpha_{j}$ in its adjacency tensor is not equal to zero.

Let $\pi_{f}(\mathcal{L}_{\mathcal{H}})|W(F)|\ne0$ for $f=(i_{1}\alpha_{1},\ldots,i_{d}\alpha_{d})\in\mathcal{F}_{d}$. Since $\pi_{f}(\mathcal{L}_{\mathcal{H}})=\prod\nolimits_{j=1}^{d}l_{i_{j}\alpha_{j}}\ne0$, we know $l_{i_{j}\alpha_{j}}\ne0$ for all $j\in[d]$. Then the tuple $i_{j}\alpha_{j}(j=1,\ldots,d)$ in $f$ corresponds either to a diagonal entry of $\mathcal{L}_{\mathcal{H}}$ or to an edge of $\mathcal{H}$. According to the number of the tuples which correspond to the diagonal entries of $\mathcal{L}_{\mathcal{H}}$, the set $\{f\in\mathcal{F}_{d}|\ \pi_{f}(\mathcal{L}_{\mathcal{H}})\ne0\}$ can be represented as the union of three disjoint sets, that is,
	\begin{equation}
		\{f\in\mathcal{F}_{d}|\ \pi_{f}(\mathcal{L}_{\mathcal{H}})\ne0\}=\mathcal{F}_{d}^{(1)}\cup\mathcal{F}_{d}^{(2)}\cup\mathcal{F}_{d}^{(3)},
	\end{equation}
	where $\mathcal{F}_{d}^{(1)}=\{f\in\mathcal{F}_{d}|\textnormal{ all tuples in}\ f\ \textnormal{correspond to diagonal entry of}\ \mathcal{L}_{\mathcal{H}}\}$, $\mathcal{F}_{d}^{(2)}=\{f=(i_{1}\alpha_{1},\ldots,i_{d}\alpha_{d})\in\mathcal{F}_{d}|\ \alpha_{j}=v_1^{(j)}\cdots v_{k-1}^{(j)}\ \textnormal{and}\ \{i_{j},v_1^{(j)},\ldots,v_{k-1}^{(j)}\}\in E(\mathcal{H})\ \textnormal{for}\ j=1,\ldots,d\}$, $\mathcal{F}_{d}^{(3)}=\{f\in\mathcal{F}_{d}|\ \pi_{f}(\mathcal{L}_{\mathcal{H}})\ne0\}\setminus(\mathcal{F}_{d}^{(1)}\cup\mathcal{F}_{d}^{(2)}).$

\begin{lem}
\textnormal{Let $\mathcal{H}$ be a $k$-uniform hypergraph with $n$ vertices. And the degree sequence of $\mathcal{H}$ is $d_1,d_2,\ldots,d_n$. Then}
\begin{equation}
(k-1)^{n-1}\sum\limits_{f\in\mathcal{F}_{d}^{(1)}}\frac{b(f)}{c(f)}\pi_{f}(\mathcal{L}_{\mathcal{H}})|W(f)|=(k-1)^{n-1}\sum\limits_{i=1}^{n}d_{i}^{d},
	\end{equation}
\begin{equation}
(k-1)^{n-1}\sum\limits_{f\in\mathcal{F}_{d}^{(2)}}\frac{b(f)}{c(f)}\pi_{f}(\mathcal{L}_{\mathcal{H}})|W(f)|=(-1)^{d}\textnormal{Tr}_{d}(\mathcal{A}_{\mathcal{H}}).
	\end{equation}
\end{lem}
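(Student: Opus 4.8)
The plan is to prove the two identities by applying the trace formula of the previous lemma separately to the degree tensor $\mathcal{D}_{\mathcal{H}}$ and to the adjacency tensor $\mathcal{A}_{\mathcal{H}}$, and then matching the resulting sums against the index sets $\mathcal{F}_{d}^{(1)}$ and $\mathcal{F}_{d}^{(2)}$. The decomposition $\mathcal{L}_{\mathcal{H}}=\mathcal{D}_{\mathcal{H}}-\mathcal{A}_{\mathcal{H}}$, together with the fact that $\mathcal{A}_{\mathcal{H}}$ has no nonzero diagonal entries (a single vertex is never an edge, since $k\ge2$), makes this matching exact: the diagonal entries of $\mathcal{L}_{\mathcal{H}}$ coincide with those of $\mathcal{D}_{\mathcal{H}}$, while the off-diagonal entries of $\mathcal{L}_{\mathcal{H}}$ coincide with those of $-\mathcal{A}_{\mathcal{H}}$.

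For the identity (2.4), I would first note that for $f\in\mathcal{F}_{d}^{(1)}$ every tuple $i_{j}\alpha_{j}$ corresponds to a diagonal entry, so $\pi_{f}(\mathcal{L}_{\mathcal{H}})=\pi_{f}(\mathcal{D}_{\mathcal{H}})=\prod_{j=1}^{d}d_{i_{j}}$, and conversely $\mathcal{F}_{d}^{(1)}$ is precisely the set of $f\in\mathcal{F}_{d}$ with $\pi_{f}(\mathcal{D}_{\mathcal{H}})\ne0$, because $\mathcal{D}_{\mathcal{H}}$ is diagonal. Applying the trace formula to $\mathcal{D}_{\mathcal{H}}$ therefore gives $\textnormal{Tr}_{d}(\mathcal{D}_{\mathcal{H}})=(k-1)^{n-1}\sum_{f\in\mathcal{F}_{d}^{(1)}}\frac{b(f)}{c(f)}\pi_{f}(\mathcal{L}_{\mathcal{H}})|W(f)|$, which is exactly the left-hand side of (2.4). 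It then remains to evaluate $\textnormal{Tr}_{d}(\mathcal{D}_{\mathcal{H}})$, and for this I would use that a $k$-order diagonal tensor $\textnormal{diag}(d_{1},\ldots,d_{n})$ has each $d_{i}$ as an eigenvalue with multiplicity $(k-1)^{n-1}$, so that $\textnormal{Tr}_{d}(\mathcal{D}_{\mathcal{H}})=\sum_{\lambda\in\sigma(\mathcal{D}_{\mathcal{H}})}\lambda^{d}=(k-1)^{n-1}\sum_{i=1}^{n}d_{i}^{d}$.

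For the identity (2.5), the key observation is that for $f\in\mathcal{F}_{d}^{(2)}$ every tuple corresponds to an edge, hence to a nonzero off-diagonal entry, so $l_{i_{j}\alpha_{j}}=-a_{i_{j}\alpha_{j}}$ and $\pi_{f}(\mathcal{L}_{\mathcal{H}})=(-1)^{d}\pi_{f}(\mathcal{A}_{\mathcal{H}})$. Moreover, $\mathcal{F}_{d}^{(2)}$ is exactly the set of $f\in\mathcal{F}_{d}$ with $\pi_{f}(\mathcal{A}_{\mathcal{H}})\ne0$, since a nonzero product of adjacency entries forces each tuple to be an edge. Substituting $\pi_{f}(\mathcal{L}_{\mathcal{H}})=(-1)^{d}\pi_{f}(\mathcal{A}_{\mathcal{H}})$ into the left-hand side of (2.5) and then reading off the trace formula for $\mathcal{A}_{\mathcal{H}}$ collapses the sum to $(-1)^{d}\textnormal{Tr}_{d}(\mathcal{A}_{\mathcal{H}})$, as desired.

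I expect the main obstacle to lie in the identity (2.4), namely in justifying the value of $\textnormal{Tr}_{d}(\mathcal{D}_{\mathcal{H}})$. The cleanest route is the spectral one just described, but if one prefers a self-contained combinatorial argument it is necessary to show directly that the only $f\in\mathcal{F}_{d}^{(1)}$ with $|W(f)|\ne0$ are those whose arcs are loops sitting at a single vertex $v$ (since a closed walk must be connected), and then to check that $\frac{b(f)}{c(f)}|W(f)|=1$ for each such configuration: here $D(f)$ consists of $d(k-1)$ copies of the loop at $v$, so both $b(f)$ and $c(f)$ equal $(d(k-1))!$ while $|W(f)|=1$, leaving $\sum_{v}d_{v}^{d}$ inside the prefactor $(k-1)^{n-1}$. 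The bookkeeping of $b(f)$, $c(f)$ and $|W(f)|$ in this degenerate loop-only case is the one place where care is needed; the identity (2.5), by contrast, is essentially a direct re-indexing of the trace formula and presents no real difficulty.
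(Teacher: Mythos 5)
Your proposal is correct. The argument for Eq.~(2.5) is exactly the paper's: on $\mathcal{F}_{d}^{(2)}$ one has $\pi_{f}(\mathcal{L}_{\mathcal{H}})=(-1)^{d}\pi_{f}(\mathcal{A}_{\mathcal{H}})$, and $\mathcal{F}_{d}^{(2)}$ is precisely the support of $\pi_{f}(\mathcal{A}_{\mathcal{H}})$, so the sum collapses to $(-1)^{d}\textnormal{Tr}_{d}(\mathcal{A}_{\mathcal{H}})$ by Lemma~2.1. For Eq.~(2.4) your primary route differs slightly: you identify the left-hand side with $\textnormal{Tr}_{d}(\mathcal{D}_{\mathcal{H}})$ and then evaluate that trace spectrally, using the fact that each $d_{i}$ is an eigenvalue of the diagonal tensor with algebraic multiplicity $(k-1)^{n-1}$. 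That fact is true (the characteristic polynomial of $\textnormal{diag}(d_{1},\ldots,d_{n})$ is $\prod_{i}(\lambda-d_{i})^{(k-1)^{n-1}}$), but it is an external result you would need to cite; the paper instead does the direct combinatorial computation, observing that $|W(f)|\ne 0$ forces all loops to sit at a single vertex and that $b(f)=c(f)=(d(k-1))!$ with $|W(f)|=1$. Since you spell out exactly this computation as your fallback, your write-up in fact contains the paper's proof verbatim; the spectral shortcut buys a cleaner statement at the cost of importing the multiplicity result, while the combinatorial version is self-contained within the Shao trace formula already set up in Section~2.
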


\begin{proof}
For $f\in\mathcal{F}_{d}^{(1)}$, if $f=(i_{1}i_{1}\cdots i_{1},\ldots,i_{d}i_{d}\cdots i_{d})$, since the arc multi-set $E(f)$ only includes loops $(i_{j},i_{j})\ (j=1,\ldots,d)$, we know that $|W(f)|\ne0$ if and only if $i_{1}=\cdots=i_{d}$. Let $f_{i}=(ii\cdots i,\ldots,ii\cdots i)\in\mathcal{F}_{d}(i=1,\ldots,n)$, then $\mathcal{F}_{d}^{(1)}=\{f_{1},\ldots,f_{n}\}$. For $f_{i}\in\mathcal{F}_{d}^{(1)}$, since $b(f_{i})=c(f_{i})=(d(k-1))!$, $|W(f_{i})|=1$ and $\pi_{f_{i}}(\mathcal{L}_{\mathcal{H}})=l_{ii\cdots i}^{d}=d_{i}^{d}$, Eq.(2.4) can be obtained directly.
	
For $f=(i_{1}\alpha_{1},\ldots,i_{d}\alpha_{d})\in\mathcal{F}_{d}^{(2)}$, where $\alpha_{j}=v_1^{(j)}\cdots v_{k-1}^{(j)}$ for $j=1,\ldots,d$. Since $\{i_{j},v_1^{(j)},\ldots,v_{k-1}^{(j)}\}\in E(\mathcal{H})$ for $j=1,\ldots,d$, we have $\pi_{f}(\mathcal{L}_{\mathcal{H}})=\prod\nolimits_{j=1}^{d}l_{i_{j}\alpha_{j}}=(-\frac{1}{(k-1)!})^{d}=(-1)^{d}\pi_{f}(\mathcal{A}_{\mathcal{H}}).$ And $\pi_{f}(\mathcal{A}_{\mathcal{H}})\ne0$ if and only if $\{i_{j},v_1^{(j)},\ldots,v_{k-1}^{(j)}\}\in E(\mathcal{H})$ for $j=1,\ldots,d$, that is, $f\in\mathcal{F}_{d}^{(2)}$, then Eq.(2.5) can be obtained.
\end{proof}	

According to Lemma 2.2, in order to obtain the expressions of the first $2k$ orders Laplacian spectral moments for $k$-uniform power hypergraphs, we should give some expressions of the spectral moments for $k$-uniform power hypergraphs.

For a graph $G$ and a positive integer $k\ge3$, the \textit{$k$-power hypergraph} of $G$, denoted by $G^{(k)}$, is a $k$-uniform hypergraph obtained by adding $k-2$ new vertices whose degrees are $1$ to each edge of $G$ \cite{ref4}. The spectrum of a hypergraph is said to be \textit{$k$-symmetric}, if its spectrum is invariant under a rotation of an angle $2\pi/k$ in the complex plane. Shao et al. \cite{ref8} gave a characterization (in terms of the traces of the adjacency tensors) of the $k$-uniform hypergraphs whose spectrum are $k$-symmetric, that is, the spectrum of a $k$-uniform hypergraph $\mathcal{H}$ is $k$-symmetric if and only if $\textnormal{Tr}_{d}(\mathcal{A}_{\mathcal{H}})=0$ for $k\nmid d$. It is obvious that the spectrum of a $k$-uniform power hypergraph is $k$-symmetric. Then, the $d$-th spectral moments of $G^{(k)}$ are equal to $0$ for $d=k+1,\ldots,2k-1$, that is,
\begin{equation}
\textnormal{Tr}_d(\mathcal{A}_{G^{(k)}})=0\ \mathrm{for}\ d=k+1,\ldots,2k-1.
\end{equation}

And the expression of the $2k$-th order spectral moment of $G^{(k)}$ is given as follows.
	
	\begin{lem}
		\textnormal{Let $G$ be a graph with $n$ vertices and $m$ edges. Let $d_i$ denote the degree of vertex $i$ in $G$ ($i=1,\ldots,n$). Then the $2k$-th order spectral moment of $G^{(k)}$ is}
        \begin{small}
		\begin{equation}
			 \textnormal{Tr}_{2k}(\mathcal{A}_{G^{(k)}})=k^{k-1}(k-1)^{N-k}\big(1-2k^{k-3}(k-1)^{1-k}\big)m+k^{2k-3}(k-1)^{N-2k+1}\sum\limits_{i=1}^{n}d_i^2,
		\end{equation}
        \end{small}
		\textnormal{where $N=n+m(k-2)$.}
	\end{lem}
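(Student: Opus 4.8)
The plan is to apply the trace formula of Shao et al.\ (Lemma 2.1) to the adjacency tensor $\mathcal{A}_{G^{(k)}}$ with $d=2k$ and with $N=n+m(k-2)$ vertices, and then to identify exactly which $f\in\mathcal{F}_{2k}$ contribute. On every term with $\pi_f(\mathcal{A}_{G^{(k)}})\ne0$ each of the $2k$ tuples $i_j\alpha_j$ is a rooted hyperedge, so every factor equals $\tfrac{1}{(k-1)!}$ and $\pi_f(\mathcal{A}_{G^{(k)}})=((k-1)!)^{-2k}$ is constant; the whole computation therefore reduces to sorting the admissible $f$ into classes according to the multi-digraph $D(f)$ they induce and evaluating $\frac{b(f)}{c(f)}|W(f)|$ on each class.

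The decisive structural step is to use that $|W(f)|\ne0$ forces $D(f)$ to be an Eulerian (connected and in/out-balanced) multi-digraph on its support. I would first record that if a hyperedge $e$ is rooted $r_x$ times at vertex $x\in e$ and $S_e=\sum_{x\in e}r_x$ denotes its total number of rootings, then balancing in- and out-degrees at a pendant vertex of $e$ (one exists since $k\ge3$) forces $r_x=S_e/k$ there; in particular every used hyperedge satisfies $S_e\ge k$. Since the total number of rootings is $\sum_e S_e=2k$, at most two hyperedges can occur, which splits the admissible $f$ into exactly two families: \textbf{(A)} a single hyperedge rooted twice at each of its $k$ vertices, so $D(f)$ is the complete digraph on the $k$ vertices of $e$ with every arc doubled; and \textbf{(B)} two hyperedges each rooted once at each vertex, which by connectivity must share a vertex $u$ --- equivalently, two edges of $G$ adjacent at $u$ --- so $D(f)$ is two copies of the complete digraph on $k$ vertices glued at $u$. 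Type (A) is indexed by the $m$ hyperedges, and Type (B) by the adjacent edge-pairs of $G$, of which there are $\sum_{i=1}^n\binom{d_i}{2}$.

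It then remains to evaluate the two contributions. For each type I would compute $b(f)$ (the product of the factorials of the arc multiplicities), $c(f)$ (the product of the factorials of the out-degrees), and $|W(f)|$ (the number of closed Eulerian walks), and multiply by the number of $f$ realising that type, the latter carrying the $((k-1)!)$ factors coming from the free orderings of each $\alpha_j$. Writing $T_A$ and $T_B$ for the resulting per-configuration values, Lemma 2.1 gives $\textnormal{Tr}_{2k}(\mathcal{A}_{G^{(k)}})=T_A\,m+T_B\sum_{i}\binom{d_i}{2}$; substituting $\sum_i\binom{d_i}{2}=\tfrac12\sum_i d_i^2-m$ regroups this into a coefficient of $m$ and a coefficient of $\sum_i d_i^2$, which I expect to match the stated constants $k^{k-1}(k-1)^{N-k}\big(1-2k^{k-3}(k-1)^{1-k}\big)$ and $k^{2k-3}(k-1)^{N-2k+1}$ after cancellation against the prefactor $(k-1)^{N-1}$ and the factor $((k-1)!)^{-2k}$.

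The main obstacle is the exact evaluation of $|W(f)|$ for the two digraphs: the number of Eulerian circuits of the doubled complete digraph on $k$ vertices (Type A) and of two complete digraphs on $k$ vertices sharing a single vertex (Type B). I would compute these via the BEST theorem, counting spanning arborescences through the Matrix--Tree theorem and exploiting the high symmetry of these digraphs, and then pair them against $b(f)$ and $c(f)$. A secondary but error-prone point is the bookkeeping of how many distinct $f\in\mathcal{F}_{2k}$ (with roots in nondecreasing order) realise each configuration, in particular the two ways of assigning the shared vertex $u$'s two rootings to its incident hyperedges in Type (B). A useful consistency check throughout is that all dependence on $N$ must cancel except through the global prefactor $(k-1)^{N-1}$, since the per-configuration data are local to one or two hyperedges.
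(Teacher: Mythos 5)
Your proposal follows essentially the same route as the paper: apply Shao's trace formula with $d=2k$, observe that the contributing $f$ split into exactly two configurations (one hyperedge rooted twice at each of its vertices, or two hyperedges sharing a vertex and each rooted once at each vertex), evaluate $b(f)$, $c(f)$ and $|W(f)|$ for each via the BEST theorem, and convert the count of adjacent edge pairs into $\sum_i\binom{d_i}{2}=\tfrac12\sum_i d_i^2-m$. Your in/out-degree balance argument at the cored vertices, which forces each used hyperedge to carry a multiple of $k$ rootings and thereby pins down the two cases, is in fact more explicit than the paper's proof, which simply asserts them.
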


	\begin{proof}
		Let $\mathcal{G}=G^{(k)}$. Then $|V(\mathcal{G})|=n+m(k-2)=N$ and $|E(\mathcal{G})|=m$. Let $N_G(P_2)$ and $N_{\mathcal{G}}(P_2^{(k)})$ denote the number of paths with length $2$ in $G$ and $\mathcal{G}$, respectively. Then $N_{\mathcal{G}}(P_2^{(k)})=N_G(P_2)=\sum\limits_{i=1}^{n}\dbinom{d_i}{2}$. From Lemma 2.1, we get $$\textnormal{Tr}_{2k}(\mathcal{A}_{\mathcal{G}})=(k-1)^{N-1}\sum\limits_{f\in\mathcal{F}_{2k}}\frac{b(f)}{c(f)}\pi_{f}(\mathcal{A}_{\mathcal{G}})|W(f)|.$$

For $f=(i_{1}\alpha_{1},\ldots,i_{2k}\alpha_{2k})\in\mathcal{F}_{2k}$, if $\pi_{f}(\mathcal{A}_{\mathcal{G}})=\prod_{j=1}^{2k}a_{i_j\alpha_j}\ne0$, then $a_{i_j\alpha_j}\ne0$ for all $j\in[2k]$. For $|W(f)|\ne0$, there are the following two cases.

Case 1: $f=(i_{1}\alpha_{1},i_{1}\beta_{1},\ldots,i_{k}\alpha_{k},i_{k}\beta_{k})=f_{e}\in\mathcal{F}_{2k}$, where $\{i_{1},\ldots, i_{k}\}=e\in E(\mathcal{G})$ and $\alpha_{j},\beta_{j}\in\big(\{i_1,\ldots,i_k\}\setminus \{i_j\}\big)^{k-1}$ for $j=1,\ldots,k$. Then
\begin{align*}
&\sum_{e\in E(\mathcal{G})}\frac{b(f_e)}{c(f_e)}\pi_{f_e}(\mathcal{A}_{\mathcal{G}})|W(f_e)|\\
=&\sum_{e\in E(\mathcal{G})}\frac{(2!)^{k(k-1)}}{\big((2k-2)!\big)^k}\Big(\frac{1}{(k-1)!}\Big)^{2k}\frac{2k(k-1)}{(2!)^{k(k-1)}}2^{k-1}k^{k-2}\big((2k-3)!\big)^k\big((k-1)!\big)^{2k}\\
=&k^{k-1}(k-1)^{1-k}|E(\mathcal{G})|.
\end{align*}

Case 2: $f=(i_{1}\alpha_{1},j_{1}\beta_{1},i_{2}\alpha_{2},\ldots,i_{k}\alpha_{k},j_{2}\beta_{2},\ldots,j_{k}\beta_{k})=f_{e_1 e_2}\in\mathcal{F}_{2k}$, where $i_1=j_1$, $\{i_{1},i_{2},\ldots,i_{k}\}=e_1\in E(\mathcal{G}),\ \{i_{1},j_{2},\ldots,j_{k}\}=e_2\in E(\mathcal{G})$ and $\alpha_{l}\in\big(\{i_1,\ldots,i_k\}\setminus \{i_l\}\big)^{k-1},\ \beta_{l}\in\big(\{j_1,\ldots,j_k\}\setminus \{j_l\}\big)^{k-1}$ for $l=1,\ldots,k$. Then
\begin{align*}
&\sum_{e_1 e_2\subset \mathcal{G}}\frac{b(f_{e_1 e_2})}{c(f_{e_1 e_2})}\pi_{f_{e_1 e_2}}(\mathcal{A}_{\mathcal{G}})|W(f_{e_1 e_2})|\\
=&\sum_{e_1 e_2\subset \mathcal{G}}\frac{2k(k-1)(k^{k-2})^2(2k-3)!\big((k-2)!\big)^{2k-2}}{\big(2(k-1)\big)!\big((k-1)!\big)^{2k-2}}\Big(\frac{1}{(k-1)!}\Big)^{2k}2\big((k-1)!\big)^{2k}\\
=&2k^{2k-3}(k-1)^{2-2k}N_{\mathcal{G}}(P_2^{(k)}).
\end{align*}

		Then
		\begin{align*} \textnormal{Tr}_{2k}(\mathcal{A}_{\mathcal{G}})
&=(k-1)^{N-1}\Big(k^{k-1}(k-1)^{1-k}|E(\mathcal{G})|+2k^{2k-3}(k-1)^{2-2k}N_{\mathcal{G}}(P_2^{(k)})\Big)\\
&=k^{k-1}(k-1)^{N-k}\big(1-2k^{k-3}(k-1)^{1-k}\big)m+k^{2k-3}(k-1)^{N-2k+1}\sum\limits_{i=1}^{n}d_i^2,
		\end{align*}
where $N=n+m(k-2)$.
	\end{proof}
	
	\section{Main results}
In this section, we give an expression of the $d$-th order Laplacian spectral moments for $k$-uniform hypergraphs. And the explicit expressions of the first $2k$ orders Laplacian spectral moments for $k$-uniform power hypergraphs are given.

Given two hypergraphs $\mathcal{H}=(V(\mathcal{H}),E(\mathcal{H}))$ and $H=(V(H),E(H))$, if $V(H)\subseteq V(\mathcal{H})$ and $E(H)\subseteq E(\mathcal{H})$, then $H$ is said to be a \textit{subhypergraph} of $\mathcal{H}$. A $k$-uniform \textit{multi-hypergraph} $\mathcal{H}$ is a pair $(V(\mathcal{H}),E(\mathcal{H}))$, where $E(\mathcal{H})$ is a multi-set of subsets of $V(\mathcal{H})$ with cardinality $k$. A \textit{Veblen hypergraph} is a $k$-uniform, $k$-valent (i.e., the degree of every vertex is a multiple of $k$) multi-hypergraph \cite{ref10}. For a multi-hypergraph $H$, let $\underline{H}$ be the simple $k$-uniform hypergraph formed by removing duplicate edges of $H$. And $H$ is called a \textit{multi-subgraph} of $\mathcal{H}$ if $\underline{H}$ is a subhypergraph of $\mathcal{H}$. Let $\mathcal{V}_{d}(\mathcal{H})$ denote the set of connected Veblen multi-subgraph of $\mathcal{H}$ with $d$ edges.

For $f=(i_{1}\alpha_{1},\ldots,i_{d}\alpha_{d})\in\mathcal{F}_{d}$ and a $k$-uniform hypergraph $\mathcal{H}$ (where $\alpha_{j}=v_1^{(j)}\cdots v_{k-1}^{(j)}$ for $j=1,\ldots,d$), the \textit{multi-subgraph induced by $f$}, denoted by $H(f)$, is the multi-hypergraph with the vertex set $V(f)\subseteq V(\mathcal{H})$ and the edge multi-set $E(H(f))=\{\{i_j,v_1^{(j)},\ldots,v_{k-1}^{(j)}\}|\ (\mathcal{A}_{\mathcal{H}})_{i_{j}\alpha_{j}}\ne0,\ 1\le j\le d\}$, and $\underline{H}(f)$ is a subhypergraph of $\mathcal{H}$.
	
A \textit{walk} ia a digraph $D$ is a non-empty alternating sequence $v_0e_0v_1e_1\cdots v_ke_k$ of vertices and arcs in $D$ such that $e_i=(v_i,v_{i+1})$ for all $i<k$. A walk is \textit{closed} if $v_0=v_k$. A closed walk in a digraph is an \textit{Eulerian closed walk} if it traverses each arc of this digraph exactly once. A digraph $D$ is called \textit{Eulerian} if $D$ has an Eulerian closed walk. Let $d^{+}(v)$ and $d^{-}(v)$ be the outdegree and indegree of the vertex $v\in V(D)$, respectively. The digraph $D$ is Eulerian if and only if $d^{+}(v)=d^{-}(v)$ for all $v\in V(D)$ and $D$ is weakly connected.
	
	Since $W(f)$ is the set of all closed walks with the arc multi-set $E(f)$, we know that $|W(f)|$ is equal to the number of Eulerian closed walks in the multi-digraph $D(f)$. For $f\in\mathcal{F}_{d}^{(3)}$, we give the following conclusion.
	
	\begin{lem}
		\textnormal{Let $\mathcal{H}$ be a $k$-uniform hypergraph with $n$ vertices. If $f\in\mathcal{F}_{d}^{(3)}$ and $|W(f)|\ne0$, then the multi-subgraph $H(f)$ induced by $f$ is a connected Veblen multi-subgraph of $\mathcal{H}$ with at most $d-1$ edges.}
	\end{lem}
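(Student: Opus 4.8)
The plan is to decompose the claim into its three components---connectedness, the Veblen property (i.e. $k$-uniform and $k$-valent), and the bound of at most $d-1$ edges---and to read each off from the two hypotheses $f\in\mathcal{F}_d^{(3)}$ and $|W(f)|\ne 0$. First I would fix the bookkeeping. Since $\pi_f(\mathcal{L}_{\mathcal{H}})\ne 0$, every tuple $i_j\alpha_j$ corresponds either to a diagonal entry of $\mathcal{L}_{\mathcal{H}}$, in which case $\alpha_j=i_j\cdots i_j$ and $E_j(f)$ is $k-1$ loops at $i_j$, or to an edge $e_j\in E(\mathcal{H})$, in which case $E_j(f)$ consists of $k-1$ arcs from the source $i_j$ to the remaining $k-1$ distinct vertices of $e_j$. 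Writing $p$ for the number of diagonal tuples and $q$ for the number of edge tuples, we have $p+q=d$, and since $f\notin\mathcal{F}_d^{(1)}\cup\mathcal{F}_d^{(2)}$ both $p\ge 1$ and $q\ge 1$. The edge bound is then immediate: $E(H(f))$ consists exactly of the $q$ edges arising from edge tuples, so $H(f)$ has $q=d-p\le d-1$ edges.

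For the other two properties I would invoke that $|W(f)|\ne 0$ means $D(f)$ admits an Eulerian closed walk, hence, by the criterion recalled just before the lemma, $D(f)$ is weakly connected and satisfies $d^{+}(v)=d^{-}(v)$ for every $v\in V(f)$. For connectedness of $H(f)$ the key observation is that diagonal tuples contribute only loops, which never join distinct vertices; since $q\ge 1$ there is at least one genuine non-loop arc, so weak connectedness of $D(f)$ forbids any vertex from carrying loops alone. Thus every vertex of $V(f)$ lies on some edge of $H(f)$, and after deleting loops the weak connectedness of $D(f)$ transfers to connectedness of the hypergraph $H(f)$, because within each edge tuple the source is joined (undirected) to all other vertices of $e_j$, so the weak components of $D(f)$ coincide with the components of $H(f)$.

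The heart of the argument---and the step I expect to be the main obstacle---is extracting $k$-valency from the degree balance. For a fixed vertex $v$ I would tally, in terms of $H(f)$, the contributions to $d^{+}(v)$ and $d^{-}(v)$: let $a(v)$ count the diagonal tuples at $v$, let $s(v)$ count the edge tuples in which $v$ is the source $i_j$, and let $t(v)$ count the edge tuples in which $v$ is a non-source vertex of the edge. Each loop contributes once to each of $d^{+}(v),d^{-}(v)$; a source emits $k-1$ arcs; and, because a $k$-uniform edge has its source distinct from each of the $k-1$ other vertices, each non-source incidence receives exactly one arc. This gives $d^{+}(v)=(k-1)\bigl(a(v)+s(v)\bigr)$ and $d^{-}(v)=(k-1)a(v)+t(v)$. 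Imposing $d^{+}(v)=d^{-}(v)$ collapses to the clean identity $t(v)=(k-1)s(v)$, whence $\deg_{H(f)}(v)=s(v)+t(v)=k\,s(v)$, a multiple of $k$. The delicate point is the precise accounting of loop contributions to both in- and out-degree and of the single incidence of each non-source vertex; once this is done, $H(f)$ is $k$-valent, and being $k$-uniform by construction, it is the desired connected Veblen multi-subgraph, completing the proof.
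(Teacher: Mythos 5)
Your proof is correct and follows essentially the same route as the paper: both derive $k$-valency from the Eulerian degree balance $d^{+}(v)=d^{-}(v)$ in $D(f)$ (your identity $t(v)=(k-1)s(v)$, hence $\deg_{H(f)}(v)=k\,s(v)$, is exactly the paper's computation in different notation), and both get the edge bound from the presence of at least one diagonal tuple. The only difference is that you spell out the connectedness of $H(f)$ from weak connectedness of $D(f)$, a point the paper leaves implicit.
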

	
	\begin{proof}
		For $f=(i_{1}\alpha_{1},\ldots,i_{d}\alpha_{d})\in\mathcal{F}_{d}$ (where $\alpha_{j}=v_1^{(j)}\cdots v_{k-1}^{(j)}$ for $j=1,\ldots,d$), if $|W(f)|\ne0$, then the multi-digraph $D(f)=(V(f),E(f))$ is Eulerian. For all $v\in V(f)$, we have $d^{+}_{D(f)}(v)=d^{-}_{D(f)}(v)$ and
		\begin{align*}
			d^{+}_{D(f)}(v)&=(k-1)|\{i_{j}\alpha_{j}|\ i_{j}=v\}|\\&=(k-1)\big(|\{i_{j}\alpha_{j}|\ i_{j}=v\ \textnormal{and}\ \{i_j,v_1^{(j)}\cdots v_{k-1}^{(j)}\}\in E(\mathcal{H})\}|+|\{i_{j}\alpha_{j}|\ i_{j}\alpha_{j}=vv\cdots v\}|\big),\\
			d^{-}_{D(f)}(v)&=|\{i_{j}\alpha_{j}|\ i_{j}\ne v\ \textnormal{and}\ v\in V_{j}(f)\}|+(k-1)|\{i_{j}\alpha_{j}|\ i_{i}\alpha_{j}=vv\cdots v\}|.
		\end{align*}
		Then
		$$(k-1)|\{i_{j}\alpha_{j}|\ i_{j}=v\ \textnormal{and}\ \{i_j,v_1^{(j)}\cdots v_{k-1}^{(j)}\}\in E(\mathcal{H})\}|=|\{i_{j}\alpha_{j}|\ i_{j}\ne v\ \textnormal{and}\ v\in V_{j}(f)\}|.$$
		Fix a vertex $v \in V(H(f))$. We have
		\begin{align*}
			d_{H(f)}(v)&=|\{i_{j}\alpha_{j}|\ i_{j}=v\ \textnormal{and}\ \{i_j,v_1^{(j)}\cdots v_{k-1}^{(j)}\}\in E(\mathcal{H})\}|+|\{i_{j}\alpha_{j}|\ i_{j}\ne v\ \textnormal{and}\ v\in V_{j}(f)\}|\\&=k|\{i_{j}\alpha_{j}|\ i_{j}=v\ \textnormal{and}\ \{i_j,v_1^{(j)}\cdots v_{k-1}^{(j)}\}\in E(\mathcal{H})\}|.
		\end{align*}
		So $k | d_{H(f)}(v)$, it follows that $H(f)$ is a Veblen hypergraph by definition. And $f\in\mathcal{F}_{d}^{(3)}$, then $H(f)$ has at most $d-1$ edges.
	\end{proof}

For a connected Veblen multi-subgraph $H$ of $\mathcal{H}$ and $f=(i_{1}\alpha_{1},\ldots,i_{d}\alpha_{d})\in\mathcal{F}_d$, $f$ is called corresponding to $H$ if $f$ satisfy the following conditions:

    {\noindent}(a) there is a integer $l(1\le l\le d-1)$, such that $i_{j_1}\alpha_{j_1},\ldots,i_{j_l}\alpha_{j_l}$ are corresponding to some edges of $H$;

    {\noindent}(b) for every edge $e\in E(H)$, there exists $j\in[d]$ such that $i_j\alpha_j$ is corresponding to $e$;

    {\noindent}(c) and others in $f$ are $v\beta_{v}$ where $\beta_{v}=v\cdots v\in[n]^{k-1}$ for $v\in V(H)$.

    Let $\mathcal{F}_{d}(H)=\{f\in\mathcal{F}_d|\ f\ \textnormal{is corresponding to}\ H\}$. From Lemma 3.1, we have $$\{f\in\mathcal{F}_d^{(3)}| |W(f)|\ne0\}=\bigcup\limits_{z=1}^{d-1}\bigcup\limits_{H\in\mathcal{V}_z(\mathcal{H})}\mathcal{F}_d(H).$$
	
	For simplicity, $\tau(D(f))$ is abbreviated to $\tau(f)$, which is the number of arborescences in multi-digraph $D(f)$. According to the above process, the formula for the $d$-th order Laplacian spectral moment of $k$-uniform hypergraphs is given as follows.
	
	\begin{thm}
		\textnormal{Let $\mathcal{H}$ be a $k$-uniform hypergraph with $n$ vertices. And the degree sequence of $\mathcal{H}$ is $d_1,d_2,\ldots,d_n$. Then}
		\begin{align*} &\textnormal{Tr}_{d}(\mathcal{L}_{\mathcal{H}})=(k-1)^{n-1}\sum\limits_{i=1}^{n}d_{i}^{d}+(-1)^{d}\textnormal{Tr}_{d}(\mathcal{A}_{\mathcal{H}})+d(k-1)^{n}\sum\limits_{z=1}^{d-1}\sum\limits_{H\in\mathcal{V}_{z}(\mathcal{H})}\sum\limits_{f\in\mathcal{F}_{d}(H)}\frac{\tau(f)\pi_{f}(\mathcal{L}_{\mathcal{H}})}{\prod\limits_{v\in V(f)}d^{+}(v)}.
		\end{align*}
	\end{thm}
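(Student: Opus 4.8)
The plan is to start from the trace formula Eq.(2.2) and evaluate the contribution of each of the three families in the partition Eq.(2.3) separately. Lemma~2.2 already disposes of the first two families: the tuples in $\mathcal{F}_{d}^{(1)}$ produce the diagonal term $(k-1)^{n-1}\sum_{i=1}^{n}d_{i}^{d}$ via Eq.(2.4), and those in $\mathcal{F}_{d}^{(2)}$ produce $(-1)^{d}\textnormal{Tr}_{d}(\mathcal{A}_{\mathcal{H}})$ via Eq.(2.5). Hence the whole problem reduces to showing that the $\mathcal{F}_{d}^{(3)}$-contribution
\[
(k-1)^{n-1}\sum_{f\in\mathcal{F}_{d}^{(3)}}\frac{b(f)}{c(f)}\pi_{f}(\mathcal{L}_{\mathcal{H}})|W(f)|
\]
equals the triple sum in the statement.

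First I would discard the tuples with $|W(f)|=0$, since they contribute nothing. By Lemma~3.1 every surviving $f\in\mathcal{F}_{d}^{(3)}$ induces a connected Veblen multi-subgraph $H(f)$ of $\mathcal{H}$ with at most $d-1$ edges, so by the partition $\{f\in\mathcal{F}_d^{(3)}:|W(f)|\ne0\}=\bigcup_{z=1}^{d-1}\bigcup_{H\in\mathcal{V}_z(\mathcal{H})}\mathcal{F}_d(H)$ the remaining sum reorganizes as $\sum_{z=1}^{d-1}\sum_{H\in\mathcal{V}_z(\mathcal{H})}\sum_{f\in\mathcal{F}_d(H)}$. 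Comparing with the target expression, it then suffices to establish the termwise identity
\[
\frac{b(f)}{c(f)}\,|W(f)|=\frac{d(k-1)\,\tau(f)}{\prod_{v\in V(f)}d^{+}(v)}
\]
for each $f\in\mathcal{F}_d(H)$; multiplying by $(k-1)^{n-1}$ then converts the prefactor $(k-1)^{n-1}\cdot(k-1)$ into the $(k-1)^{n}$ of the theorem.

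The heart of the argument, and the step I expect to be the main obstacle, is this termwise identity, which I would derive from the BEST theorem applied to the Eulerian multi-digraph $D(f)$. Since $|W(f)|$ is the number of Eulerian closed walks in $D(f)$, the BEST theorem expresses it through the arborescence count $\tau(f)$ and the out-degrees: up to the bookkeeping of arc multiplicities,
\[
|W(f)|=\frac{|E(f)|}{b(f)}\,\tau(f)\prod_{v\in V(f)}\big(d^{+}(v)-1\big)!,
\]
where $|E(f)|=d(k-1)$ is the total number of arcs, the factor $|E(f)|$ accounts for passing from a cyclic Eulerian circuit to a closed walk with a distinguished initial arc, and $b(f)$ de-labels the parallel arcs. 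On the other hand $c(f)=\prod_{v\in V(f)}\big(d^{+}(v)\big)!$ and every vertex of the Eulerian digraph $D(f)$ has $d^{+}(v)\ge1$, whence $c(f)/\prod_{v\in V(f)}d^{+}(v)=\prod_{v\in V(f)}\big(d^{+}(v)-1\big)!$. Substituting the displayed value of $|W(f)|$ and this expression for $c(f)$ into $\frac{b(f)}{c(f)}|W(f)|$ yields exactly $\frac{d(k-1)\tau(f)}{\prod_{v}d^{+}(v)}$, which is the required identity.

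The genuinely delicate point throughout is the accounting of arc multiplicities in $D(f)$: the factors $b(f)$ and $c(f)$ must be reconciled with the labeled-versus-unlabeled conventions implicit in both $|W(f)|$ and $\tau(f)$, so that no stray constant survives and the single factor $d(k-1)$ is reproduced. To pin this down I would calibrate the identity on a minimal case, for instance an $f$ whose $D(f)$ is a short cycle arising from a repeated edge, and verify directly that the combination $b(f)/c(f)$ together with the BEST count recovers the factor $d(k-1)$ exactly.
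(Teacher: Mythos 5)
Your proposal follows the paper's own proof essentially step for step: the same partition of $\mathcal{F}_d$ via Lemma 2.2, the same reorganization of the $\mathcal{F}_d^{(3)}$-sum over Veblen multi-subgraphs via Lemma 3.1, and the same termwise identity obtained by combining the relation $|W(f)|=\frac{|E(f)|}{b(f)}|\mathfrak{E}(f)|$ with the BEST theorem and the cancellation $c(f)/\prod_{v}d^{+}(v)=\prod_{v}(d^{+}(v)-1)!$. The bookkeeping you flag as delicate is handled in the paper exactly as you describe, so the argument is correct and matches the original.
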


\begin{proof}
From Eq.(2.3), the $d$-th Laplacian spectral moments of $\mathcal{H}$ is
\begin{equation}
\textnormal{Tr}_{d}(\mathcal{L}_{\mathcal{H}})=(k-1)^{n-1}\sum\limits_{j=1}^{3}\sum\limits_{f\in\mathcal{F}_{d}^{(j)}}\frac{b(f)}{c(f)}\pi_{f}(\mathcal{L}_{\mathcal{H}})|W(f)|.
\end{equation}

For $f\in\mathcal{F}_{d}^{(3)}$, let $\widetilde{D}(f)$ be the digraph obtained by removing all repeated arcs of $D(f)$. Then $c(f)=\prod\nolimits_{v\in V(f)}d^{+}(v)!$, $b(f)=\prod_{e\in\widetilde{D}(f)}w(e)!$ and $|E(f)|=d(k-1)$. From Theorem 6 in \cite{ref13}, the number of Eulerian closed walks in $D(f)$ is
\begin{equation}
|W(f)|=\frac{|E(f)|}{b(f)}|\mathfrak{E}(f)|,
\end{equation}
where $|\mathfrak{E}(f)|$ is the number of the Eulerian circuits in $D(f)$.

From BEST Theorem \cite{ref14,ref15}, the number of the Eulerian circuits in $D(f)$ is
\begin{equation}
|\mathfrak{E}(f)|=\tau(f)\prod\limits_{v\in V(f)}(d^{+}(v)-1)!.
\end{equation}

According to Eq.(3.2) and Eq.(3.3), we have
\begin{equation}
\begin{aligned}
&(k-1)^{n-1}\sum\limits_{f\in\mathcal{F}_{d}^{(3)}}\frac{b(f)}{c(f)}\pi_{f}(\mathcal{L}_{\mathcal{H}})|W(f)|\\
=&(k-1)^{n-1}\sum\limits_{z=1}^{d-1}\sum\limits_{H\in\mathcal{V}_{z}(\mathcal{H})}\sum\limits_{f\in\mathcal{F}_{d}(H)}\frac{b(f)}{c(f)}\pi_{f}(\mathcal{L}_{\mathcal{H}})|W(f)|\\
=&d(k-1)^n\sum\limits_{z=1}^{d-1}\sum\limits_{H\in\mathcal{V}_{z}(\mathcal{H})}\sum\limits_{f\in\mathcal{F}_{d}(H)}\frac{\tau(f)}{\prod\limits_{v\in V(f)}d^{+}(v)}\pi_{f}(\mathcal{L}_{\mathcal{H}}).
\end{aligned}
\end{equation}

Then we can obtain the expression for the $d$-th order Laplacian spectral moment of $\mathcal{H}$ by substituting Eq.(2.4), Eq.(2.5) and Eq.(3.4) into Eq.(3.1).
\end{proof}
	
\begin{remark}
\textnormal{Since $\textnormal{Tr}_{d}(\mathcal{A}_{\mathcal{H}})=0\ (d=1,\ldots,k-1)$ and $\textnormal{Tr}_{k}(\mathcal{A}_{\mathcal{H}})=k^{k-1}(k-1)^{n-k}|E(\mathcal{H})|$ \cite{ref3}, $\mathcal{V}_d(\mathcal{H})=\emptyset$ for $d=1,\ldots,k-1$. The expressions of the first $k$ orders Laplacian spectral moments of a $k$-uniform hypergraph $\mathcal{H}$ can be obtained directly by using the formulas given in Theorem 3.1. And these expressions have been given in \cite{ref9}.}
\end{remark}

\begin{remark}
\textnormal{Since $\textnormal{Tr}_{k+1}(\mathcal{A}_{\mathcal{H}})=(k+1)(k-1)^{n-k}C_k(\#\ \textnormal{of simplices in}\ \mathcal{H})$ \cite{ref3}, and $\mathcal{V}_k(\mathcal{H})=\{ke|\ e\in E(\mathcal{H})\}$. The expressions of the $k+1$-st orders Laplacian spectral moments of a $k$-uniform hypergraph $\mathcal{H}$ can be obtained by using the formulas given in Theorem 3.1. And these expressions have been given in \cite{ref12}.}
\end{remark}

If the $k$-uniform hypergraph is the $k$-power hypergraph $G^{(k)}$ of a graph $G$ in Remark 3.1 and Remark 3.2, the expressions of the first $k+1$ order Laplacian spectral moments for $G^{(k)}$ can be obtained.
	
For a graph $G$, the expressions of the first $2k$ orders Laplacian spectral moments of its $k$-power hypergraph $G^{(k)}$ can be given by considering the formulas shown in Theorem 3.1, and these expressions are represented by some parameters of $G$.
	
	\begin{thm}
		\textnormal{Let $G$ be a graph with $n$ vertices and $m$ edges. Let $d_i$ denote the degree of vertex $i$ in $G$ ($i=1,\ldots,n$). For the $k$-power hypergraph $G^{(k)}$ of $G$, then}
\begin{align*}
\textnormal{Tr}_d(\mathcal{L}_{G^{(k)}})&=(k-1)^{N-1}\sum\limits_{i=1}^nd_i^d+(-1)^kdk^{k-2}(k-1)^{N-k}\Big(\sum\limits_{i-1}^nd_i^{d-k+1}+\sum\limits_{\{i,j\}\in E(G)}N_{d-k}(d_i,d_j)\Big)\\
&+(k-1)^{N-k}\big((k-1)^{k-1}+(-1)^kdk\big)(k-2)m,
\end{align*}
\textnormal{for $d=k+1,\ldots,2k-1$, and}
\begin{align*}
\textnormal{Tr}_{2k}(\mathcal{L}_{G^{(k)}})&=(k-1)^{N-1}\sum\limits_{i=1}^nd_i^{2k}+(-1)^k2k^{k-1}(k-1)^{N-k}\Big(\sum\limits_{i-1}^nd_i^{k+1}+\sum\limits_{\{i,j\}\in E(G)}N_{k}(d_i,d_j)\Big)\\
&+k^{2k-3}(k-1)^{N-2k+1}\sum\limits_{i=1}^nd_i^2+\ell m,
\end{align*}
		\textnormal{where} $N=n+m(k-2)$, $N_{s}(d_i,d_j)=\sum\nolimits_{\begin{subarray}{c} 1\le c_i+c_j\le s \\ 0\le c_i,c_j<s \end{subarray}}d_i^{c_i}d_j^{c_j}\big(s=1,\ldots,k\big)$, $\ell=(k-1)^{N-k}\big((k-1)^{k-1}(k-2)+(-1)^k2k^{k-1}(k-2)+k^{k-1}-2k^{2k-3}(k-1)^{1-k}\big)$.
	\end{thm}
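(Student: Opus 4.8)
The plan is to specialize Theorem 3.1 to $\mathcal{H}=G^{(k)}$, which has $N=n+m(k-2)$ vertices, and to evaluate each of its three summands. First I would record the degree sequence of $G^{(k)}$: every original vertex $i$ keeps its degree $d_i$, while each edge of $G$ contributes $k-2$ new vertices of degree $1$. Hence $\sum_{v\in V(G^{(k)})}d_v^{\,d}=\sum_{i=1}^n d_i^{\,d}+m(k-2)$, so the first term of Theorem 3.1 produces $(k-1)^{N-1}\sum_{i=1}^n d_i^{\,d}$ together with a term $(k-1)^{N-1}(k-2)m=(k-1)^{N-k}(k-1)^{k-1}(k-2)m$, which I would carry into the final coefficient of $m$. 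For the second summand $(-1)^d\textnormal{Tr}_d(\mathcal{A}_{G^{(k)}})$ I would invoke the $k$-symmetry of the power-hypergraph spectrum: by Eq.(2.6) it vanishes for $d=k+1,\dots,2k-1$, and so contributes nothing to the first formula, whereas for $d=2k$ it equals $\textnormal{Tr}_{2k}(\mathcal{A}_{G^{(k)}})$, whose value is supplied verbatim by Lemma 2.3. This is what yields the $k^{2k-3}(k-1)^{N-2k+1}\sum_i d_i^2$ term and the remaining pieces of $\ell$.

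The substance of the proof is the Veblen summand. The key structural observation is that, because the $k-2$ vertices added to each edge have degree $1$ in $G^{(k)}$, any hyperedge occurring in a Veblen ($k$-valent) multi-subgraph must occur with multiplicity divisible by $k$; since only Veblen multi-subgraphs with at most $d-1\le 2k-1$ edges are relevant, the sole possibility is a single hyperedge $\hat e$ repeated exactly $k$ times, with $z=k$. Fixing such an $H=k\hat e$ with $\hat e=\{i,j,w_1,\dots,w_{k-2}\}$, I would analyse $\mathcal{F}_d(H)$: the Eulerian balance $d^{+}_{D(f)}(v)=d^{-}_{D(f)}(v)$ forces each of the $k$ vertices of $\hat e$ to be the tail of exactly one edge-tuple, so $D(f)$ is the complete bidirected graph on $\hat e$ together with $(k-1)t_v$ loops at each $v$, where the $t_v\ge 0$ record the $d-k$ diagonal tuples and $\sum_v t_v=d-k$. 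Then $\tau(f)=k^{k-2}$ (the number of arborescences of the bidirected $K_k$, loops being irrelevant), $d^{+}(v)=(k-1)(1+t_v)$, and $\pi_f(\mathcal{L})=(-1)^k\big((k-1)!\big)^{-k}d_i^{t_i}d_j^{t_j}$ since the new vertices contribute degree $1$.

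Counting the tuples $f$ realizing a given distribution $(t_v)$ and substituting into $\dfrac{\tau(f)\pi_f(\mathcal{L})}{\prod_v d^{+}(v)}$, the factorials from the $(k-1)!$ orderings of the $\alpha_j$ and the factors $1+t_v$ cancel, leaving a contribution proportional to $d_i^{t_i}d_j^{t_j}$ per distribution. I would then sum over all loop-distributions and over all edges of $G$: loops sitting on the two original endpoints $i,j$ contribute the monomials $d_i^{a}d_j^{b}$, where collecting the extremal exponents gives $\sum_i d_i^{\,d-k+1}$ and the remaining mixed monomials assemble into $N_{d-k}(d_i,d_j)$, while loops placed on the degree-$1$ new vertices contribute pure constants that, summed over edges, feed the $m$-term. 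Reassembling the three summands and collecting the powers of $k$ and $k-1$ then gives the two displayed formulas, the case $d=2k$ differing only through the extra Lemma 2.3 contribution absorbed into $\ell$.

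I expect the main obstacle to be precisely this last combinatorial bookkeeping. The enumeration of $\mathcal{F}_d(k\hat e)$ must be done carefully (tracking the $(k-1)!$ choices for each $\alpha_j$ and the degeneracy coming from tuples that share a root), and the multinomial sum over $(t_v)$ must be organized so that the placements of loops on the new vertices collapse into the single coefficient of $m$ while those on $i,j$ collapse into $\sum_i d_i^{\,d-k+1}$ and $N_{d-k}(d_i,d_j)$. Evaluating $\tau(f)$ and $\prod_v d^{+}(v)$ and then reconciling the resulting weighted sum with the compact invariants claimed in the statement—together with the attendant sign and power-of-$k$ accounting—is where slips are most likely and will require the closest attention.
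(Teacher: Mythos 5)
Your proposal follows essentially the same route as the paper's proof: specialize Theorem 3.1 to $G^{(k)}$, split the degree sum into the original vertices plus the $m(k-2)$ cored vertices of degree $1$, dispose of the adjacency term via Eq.~(2.6) for $d=k+1,\dots,2k-1$ and Lemma 2.3 for $d=2k$, and reduce the Veblen summand to the multi-subgraphs $k\hat e$ with $\tau(f)=k^{k-2}$, $d^{+}(v)=(k-1)(1+t_v)$ and the loop-distribution sum that produces $\sum_i d_i^{\,d-k+1}+N_{d-k}(d_i,d_j)$ and the residual $m$-term. The only part not carried to completion is the final arithmetic bookkeeping, which you correctly identify and which is exactly the computation the paper performs.
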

	
	\begin{proof}
Let $\mathcal{G}=G^{(k)}$, then $|V(\mathcal{G})|=N=n+m(k-2)$ and $|E(\mathcal{G})|=m$.

Since $\mathcal{G}$ is the $k$-power hypergraph of $G$, from the definition of Veblen hypergraph, we know that $\mathcal{V}_{d}(\mathcal{G})=\emptyset$ for $k\nmid d$ and $d\in[2k]$. For a Veblen multi-subgraph $H\in\mathcal{V}_{k}(\mathcal{G})$, it is easy to see that $\underline{H}\in E(\mathcal{G})$. For convenience, let $ke$ denote the connected Veblen multi-subgraph with $k$ edges and $\underline{ke}=e\in E(\mathcal{G})$. Then, $\mathcal{V}_{k}(\mathcal{G})=\{ke|\ e\in E(\mathcal{G})\}$. For $d=k+1,\ldots,2k$, we have

\begin{align*}
\textnormal{Tr}_d(\mathcal{L}_{\mathcal{G}})=&(k-1)^{N-1}\Big(m(k-2)+\sum\limits_{i=1}^{n}d_{i}^{d}\Big)+(-1)^{d}\textnormal{Tr}_{d}(\mathcal{A}_{\mathcal{G}})\\&+d(k-1)^{N}\sum\limits_{e\in E(\mathcal{G})}\sum\limits_{f\in\mathcal{F}_{d}(ke)}\frac{\tau(f)}{\prod\limits_{v\in V(f)}d^{+}(v)}\pi_{f}(\mathcal{L}_{\mathcal{G}}).
\end{align*}

For $f\in\mathcal{F}_{d}(ke)\ (\textnormal{where}\ e=\{i_{1},i_{2},\ldots,i_{k}\}\in E(\mathcal{G}))$, let
$$f=((i_1 \beta_1)^{c_1}, i_1\alpha_1, (i_2 \beta_2)^{c_2}, i_2\alpha_2, \ldots,(i_k \beta_k)^{c_k}, i_k\alpha_k),$$
where $i_{1}<i_{2}<\cdots<i_{k}$. For any $j\in[k]$, $\alpha_{j}\in(\{i_1,\ldots,i_k\}\setminus\{i_j\})^{k-1}$, $\beta_{j}=i_{j}\cdots i_{j}\in[N]^{k-1}$, $c_{j}\ge0$ is the total number of times that $i_{j}\beta_{j}$ appears in $f$, and $\sum\nolimits_{j=1}^{k}c_j=d-k$. Next, we consider the following two cases for $f\in\mathcal{F}_{d}(ke)$.
	
Case 1: If there exists $j\in[k]$ such that $c_{j}=d-k$, then
$$f=f_{e,i_{j}}=(i_1\alpha_1, \ldots, i_{j-1}\alpha_{j-1}, (i_j \beta_j)^{d-k}, i_j\alpha_j, \ldots, i_k\alpha_k)\in\mathcal{F}_{d}(ke).$$
We have
$$\tau(f)=k^{k-2},\ \prod\limits_{v\in V(f)}d^{+}(v)=(d-k+1)(k-1)^{k},$$
and there are $(d-k+1)((k-1)!)^{k}$ elements in $\mathcal{F}_{d}$ which share the same arc multi-set as $f_{e,i_{j}}$, then
$$\sum\limits_{j=1}^{k}\frac{\tau(f)}{\prod\limits_{v\in V(f)}d^{+}(v)}\pi_{f}(\mathcal{L}_{\mathcal{G}})=(-1)^{k}k^{k-2}(k-1)^{-k}\sum\limits_{j=1}^{k} d_{i_j}^{d-k}.$$
		
Case 2: If for any $j\in[k]$, $0\le c_{j}<d-k$, then
$$f=f_{e,\{c_1,c_2,\ldots,c_k\}}=((i_1 \beta_1)^{c_1}, i_1\alpha_1, (i_2 \beta_2)^{c_2}, i_2\alpha_2, \ldots,(i_k \beta_k)^{c_k}, i_k\alpha_k)\in\mathcal{F}_{d}(ke).$$
We have
$$\tau(f)=k^{k-2},\ \prod\limits_{v\in V(f)}d^{+}(v)=(k-1)^{k}\prod\limits_{j=1}^{k}(c_{j}+1),$$
and there are $((k-1)!)^{k}\prod\limits_{j=1}^{k}(c_{j}+1)$ elements in $\mathcal{F}_{d}$ which share the same arc multi-set as $f_{e,\{c_1,c_2,\ldots,c_k\}}$, then
$$\sum\limits_{\begin{subarray}{c} c_1+\cdots+c_k=d-k \\ \forall j\in[k],0\le c_{j}<d-k \end{subarray}}\frac{\tau(f)}{\prod\limits_{v\in V(f)}d^{+}(v)}\pi_{f}(\mathcal{L}_{\mathcal{G}})=(-1)^{k}k^{k-2}(k-1)^{-k}\sum\limits_{\begin{subarray}{c} c_1+\cdots+c_k=d-k \\ \forall j\in[k],0\le c_{j}<d-k \end{subarray}}\prod\limits_{j=1}^{k}d_{i_j}^{c_j}.$$
		
		{\noindent}Then
		\begin{small}
		\begin{align*}
			&\sum\limits_{e\in E(\mathcal{G})}\sum\limits_{f\in\mathcal{F}_{d}(ke)}\frac{\tau(f)}{\prod\limits_{v\in V(f)}d^{+}(v)}\pi_{f}(\mathcal{L}_{\mathcal{G}})\\
=&(-1)^{k}k^{k-2}(k-1)^{-k}\sum\limits_{\{i_{1},\ldots,i_{k}\}\in E(\mathcal{G})}\bigg(\sum\limits_{j=1}^{k} d_{i_j}^{d-k}+\sum\limits_{\begin{subarray}{c} c_1+\cdots+c_k=d-k \\ \forall j\in[k],0\le c_{j}<d-k \end{subarray}}\prod\limits_{j=1}^{k}d_{i_j}^{c_j}\bigg)\\
=&(-1)^{k}k^{k-2}(k-1)^{-k}\bigg(\sum\limits_{i=1}^{N}\sum\limits_{e\in E_{i}} d_{i}^{d-k}+\sum\limits_{\{i_{1},\ldots,i_{k}\}\in E(\mathcal{G})}\sum\limits_{\begin{subarray}{c} c_1+\cdots+c_k=d-k \\ \forall j\in[k],0\le c_{j}<d-k \end{subarray}}\prod\limits_{j=1}^{k}d_{i_j}^{c_j}\bigg)\\
=&(-1)^{k}k^{k-2}(k-1)^{-k}\bigg(\sum\limits_{i=1}^{N}d_{i}^{d-k+1}+\sum\limits_{\{i_{1},\ldots,i_{k}\}\in E(\mathcal{G})}\sum\limits_{\begin{subarray}{c} c_1+\cdots+c_k=d-k \\ \forall j\in[k],0\le c_{j}<d-k \end{subarray}}\prod\limits_{j=1}^{k}d_{i_j}^{c_j}\bigg).
		\end{align*}
	    \end{small}

For $e=\{i,j\}\in E(G)$, let $e^{(k)}=\{i,j\}^{(k)}=\{i,j,v_{e,1},\ldots,v_{e,k-2}\}\in E(G^{(k)})$, where $v_{e,l}$ are cored vertex (the vertex whose degree is 1 \cite{ref4}), then $d_{e,l}=1(l=1,\ldots,k-2)$ and $1\le c_i+c_j=d-k-\sum\nolimits_{l=1}^{k-2}c_{e,l}\le d-k$. Then, for $d=k+1,\ldots,2k$, the $d$-th order Laplacian spectral moment of $G^{(k)}$ is
\begin{align*}
\textnormal{Tr}_d(\mathcal{L}_{G^{(k)}})=&(-1)^{d}\textnormal{Tr}_d(\mathcal{A}_{G^{(k)}})+(k-1)^{N-1}\Big(m(k-2)+\sum\limits_{i=1}^{n}d_{i}^{d}\Big)\\+&(-1)^{k}dk^{k-2}(k-1)^{N-k}\bigg(\sum\limits_{i=1}^{N}d_{i}^{d-k+1}+\sum\limits_{\{i,j\}\in E(G)}\sum\limits_{\begin{subarray}{c} 1\le c_i+c_j\le d-k \\ 0\le c_i,c_j<d-k \end{subarray}}d_{i}^{c_i}d_{j}^{c_j}\bigg).
\end{align*}

By substituting Eq.(2.6) and Eq.(2.7) into the above equation, the expressions of $\textnormal{Tr}_d(\mathcal{L}_{G^{(k)}})(d=k+1,\ldots,2k)$ can be obtained. 	
	\end{proof}

Let $\sum\nolimits_{i=s}^{t}a_i=0$ if $t<s$. Let $G=(V(G),E(G))$ be a finite simple graph. Let $d_{v}$ denote the degree of the vertex $v$ in $G$. The first and second Zagreb indices were introduced in \cite{ref17,ref18}, which are $M_1(G)=\sum\nolimits_{v\in V(G)}d_v^2=\sum\nolimits_{\{u,v\}\in E(G)}\big(d_u+d_v\big)$ and $M_2(G)=\sum\nolimits_{\{u,v\}\in E(G)}d_ud_v$, respectively. The first and second variable Zagreb indices were introduced in \cite{ref19,ref20}, which are $M_1^{(r)}(G)=\sum\nolimits_{v\in V(G)}d_v^r=\sum\nolimits_{\{u,v\}\in E(G)}\big(d_u^{r-1}+d_v^{r-1}\big)$ and $M_2^{(r)}(G)=\sum\nolimits_{\{u,v\}\in E(G)}\big(d_ud_v\big)^{r}$ (where $r$ is a variable parameter), respectively. And the generalized Zagreb index $M_{\{r,s\}}(G)=\sum\nolimits_{\{u,v\}\in E(G)}\big(d_u^rd_v^s+d_u^sd_v^r\big)$ (where $r$ and $s$ are variable parameters) was introduced in \cite{ref21}. Then the expressions of Laplacian spectral moments of power hypergraphs given in Theorem 3.2 can be represented by the Zagreb indices of graphs.

\begin{remark}
\textnormal{Let $G$ be a graph with $n$ vertices and $m$ edges. Let $d_i$ denote the degree of vertex $i$ in $G$ ($i=1,\ldots,n$). Then}
\begin{align*}
\textnormal{Tr}_d(\mathcal{L}_{G^{(k)}})&=(k-1)^{N-k}\big((k-1)^{k-1}+(-1)^kdk\big)(k-2)m+(k-1)^{N-1}M_1^{(d)}(G)\\&+(-1)^kdk^{k-2}(k-1)^{N-k}\Bigg(\sum\limits_{r=2}^{d-k+1}M_1^{(r)}(G)+\sum\limits_{r=1}^{\lfloor \frac{d-k}{2} \rfloor}M_2^{(r)}(G)+\sum\limits_{r=1}^{\lfloor \frac{d-k}{2} \rfloor}\sum\limits_{s=r+1}^{d-k-r}M_{\{r,s\}}(G)\Bigg),
\end{align*}
\textnormal{for $d=k+1,\ldots,2k-1$, and}
\begin{align*}
\textnormal{Tr}_{2k}(\mathcal{L}_{G^{(k)}})&=\ell m+(k-1)^{N-1}M_1^{(2k)}(G)+k^{2k-3}(k-1)^{N-2k+1}M_1(G)\\&+(-1)^k2k^{k-1}(k-1)^{N-k}\Bigg(\sum\limits_{r=2}^{k+1}M_1^{(r)}(G)+\sum\limits_{r=1}^{\lfloor \frac{k}{2} \rfloor}M_2^{(r)}(G)+\sum\limits_{r=1}^{\lfloor \frac{k}{2} \rfloor}\sum\limits_{s=r+1}^{k-r}M_{\{r,s\}}(G)\Bigg),
\end{align*}
\textnormal{where} $N=n+m(k-2)$ and $\ell=(k-1)^{N-k}\big((k-1)^{k-1}(k-2)+(-1)^k2k^{k-1}(k-2)+k^{k-1}-2k^{2k-3}(k-1)^{1-k}\big)$.
\end{remark}

\begin{proof}
For the terms related to degree of vertex, we have

\begin{align*}
&\sum\limits_{i=1}^{N}d_{i}^{d-k+1}+\sum\limits_{\{i,j\}\in E(G)}\sum\limits_{\begin{subarray}{c} 1\le c_i+c_j\le d-k \\ 0\le c_i,c_j<d-k \end{subarray}}d_{i}^{c_i}d_{j}^{c_j}\\
=&\sum\limits_{\{i,j\}\in E(G)}\bigg(d_i^{d-k}+d_j^{d-k}+\sum\limits_{\begin{subarray}{c} 1\le c_i+c_j\le d-k \\ 0\le c_i,c_j<d-k \end{subarray}}d_{i}^{c_i}d_{j}^{c_j}\bigg)\\
=&\sum\limits_{\{i,j\}\in E(G)}\bigg(\sum\limits_{r=1}^{d-k}\big(d_i^r+d_j^r\big)+\sum\limits_{r=1}^{\lfloor \frac{d-k}{2} \rfloor}(d_id_j)^r+\sum\limits_{r=1}^{\lfloor \frac{d-k}{2} \rfloor}\sum\limits_{s=r+1}^{d-k-r}\big(d_i^rd_j^s+d_i^sd_j^r\big)\bigg)\\
=&\sum\limits_{r=1}^{d-k}\sum\limits_{\{i,j\}\in E(G)}\big(d_i^r+d_j^r\big)+\sum\limits_{r=1}^{\lfloor \frac{d-k}{2} \rfloor}\sum\limits_{\{i,j\}\in E(G)}(d_id_j)^r+\sum\limits_{r=1}^{\lfloor \frac{d-k}{2} \rfloor}\sum\limits_{s=r+1}^{d-k-r}\sum\limits_{\{i,j\}\in E(G)}\big(d_i^rd_j^s+d_i^sd_j^r\big)\\
=&\sum\limits_{r=2}^{d-k+1}M_1^{(r)}(G)+\sum\limits_{r=1}^{\lfloor \frac{d-k}{2} \rfloor}M_2^{(r)}(G)+\sum\limits_{r=1}^{\lfloor \frac{d-k}{2} \rfloor}\sum\limits_{s=r+1}^{d-k-r}M_{\{r,s\}}(G),\ \mathrm{for}\ d=k+1,\ldots,2k.
\end{align*}

Then the expressions shown in Theorem 3.2 can be represented by the Zagreb indices of graphs.
\end{proof}

Given a $k$-uniform hypergraph $\mathcal{H}$, the \textit{signless Laplacian tensor} of $\mathcal{H}$ is $\mathcal{Q}_{\mathcal{H}}=\mathcal{D}_{\mathcal{H}}+\mathcal{A}_{\mathcal{H}}$. And the $d$-th order signless Laplacian spectral moment of $\mathcal{H}$ is equal to the $d$-th order trace of $\mathcal{Q}_{\mathcal{H}}$. For the signless Laplacian spectral moments of hypergraphs, similar conclusions as Theorem 3.1 and Theorem 3.2 can be obtained by the same method, which is shown as follows.

\begin{thm}
		\textnormal{Let $\mathcal{H}$ be a $k$-uniform hypergraph with $n$ vertices. And the degree sequence of $\mathcal{H}$ is $d_1,d_2,\ldots,d_n$. Then}
		\begin{align*} &\textnormal{Tr}_{d}(\mathcal{Q}_{\mathcal{H}})=(k-1)^{n-1}\sum\limits_{i=1}^{n}d_{i}^{d}+\textnormal{Tr}_{d}(\mathcal{A}_{\mathcal{H}})+d(k-1)^{n}\sum\limits_{z=1}^{d-1}\sum\limits_{H\in\mathcal{V}_{z}(\mathcal{H})}\sum\limits_{f\in\mathcal{F}_{d}(H)}\frac{\tau(f)\pi_{f}(\mathcal{Q}_{\mathcal{H}})}{\prod\limits_{v\in V(f)}d^{+}(v)}.
		\end{align*}
	\end{thm}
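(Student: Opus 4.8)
The plan is to repeat the argument of Theorem 3.1 essentially verbatim, tracking only the place where the sign of the off-diagonal entries enters. The starting point is Lemma 2.1 applied to $\mathcal{T}=\mathcal{Q}_{\mathcal{H}}=\mathcal{D}_{\mathcal{H}}+\mathcal{A}_{\mathcal{H}}$:
$$\textnormal{Tr}_d(\mathcal{Q}_{\mathcal{H}})=(k-1)^{n-1}\sum_{f\in\mathcal{F}_d}\frac{b(f)}{c(f)}\pi_f(\mathcal{Q}_{\mathcal{H}})|W(f)|.$$
First I would observe that the nonzero entries of $\mathcal{Q}_{\mathcal{H}}$ occupy exactly the same positions as those of $\mathcal{L}_{\mathcal{H}}$: the diagonal entries $q_{ii\cdots i}=d_i=l_{ii\cdots i}$ coincide, while the off-diagonal entries $q_{i\alpha}=+\frac{1}{(k-1)!}$ are supported on the edges of $\mathcal{H}$, precisely where $l_{i\alpha}=-\frac{1}{(k-1)!}$. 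Consequently the decomposition Eq.(2.3), namely $\{f:\pi_f(\mathcal{Q}_{\mathcal{H}})\neq0\}=\mathcal{F}_d^{(1)}\cup\mathcal{F}_d^{(2)}\cup\mathcal{F}_d^{(3)}$, holds unchanged, and every purely combinatorial quantity attached to $f$ (namely $b(f)$, $c(f)$, $|W(f)|$, the out-degrees $d^{+}(v)$, and the arborescence count $\tau(f)$) is determined by the multi-digraph $D(f)$ alone, hence equals its value in the Laplacian case.

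Next I would evaluate the three blocks. For $f\in\mathcal{F}_d^{(1)}$ all factors are diagonal, so $\pi_f(\mathcal{Q}_{\mathcal{H}})=d_i^d=\pi_f(\mathcal{L}_{\mathcal{H}})$ and the computation of Eq.(2.4) goes through verbatim, contributing $(k-1)^{n-1}\sum_{i=1}^{n}d_i^d$. For $f\in\mathcal{F}_d^{(2)}$ each of the $d$ factors is an off-diagonal entry supported on an edge, so now $\pi_f(\mathcal{Q}_{\mathcal{H}})=\big(\frac{1}{(k-1)!}\big)^d=\pi_f(\mathcal{A}_{\mathcal{H}})$; this is the single point at which the argument departs from the Laplacian case, since the factor $(-1)^d$ that appeared in Eq.(2.5) is replaced by $+1$, yielding the contribution $\textnormal{Tr}_d(\mathcal{A}_{\mathcal{H}})$. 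For $f\in\mathcal{F}_d^{(3)}$, Lemma 3.1 depends only on the support of the tensor and on the Eulerian structure of $D(f)$, so it applies to $\mathcal{Q}_{\mathcal{H}}$ without change, giving $\{f\in\mathcal{F}_d^{(3)}:|W(f)|\neq0\}=\bigcup_{z=1}^{d-1}\bigcup_{H\in\mathcal{V}_z(\mathcal{H})}\mathcal{F}_d(H)$.

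Finally I would apply the Eulerian-walk count Eq.(3.2) together with the BEST theorem Eq.(3.3) exactly as in the proof of Theorem 3.1; since $|W(f)|$, $b(f)$, $c(f)$, $\tau(f)$ and $d^{+}(v)$ are all untouched, the block $\mathcal{F}_d^{(3)}$ contributes
$$d(k-1)^n\sum_{z=1}^{d-1}\sum_{H\in\mathcal{V}_z(\mathcal{H})}\sum_{f\in\mathcal{F}_d(H)}\frac{\tau(f)\,\pi_f(\mathcal{Q}_{\mathcal{H}})}{\prod_{v\in V(f)}d^{+}(v)},$$
with $\pi_f(\mathcal{Q}_{\mathcal{H}})$ now carrying the appropriate product of $+\frac{1}{(k-1)!}$ and degree factors. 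Summing the three contributions gives the stated formula. The only genuine subtlety, and thus the main point to verify carefully, is the bookkeeping for $\mathcal{F}_d^{(2)}$: replacing $\mathcal{L}_{\mathcal{H}}$ by $\mathcal{Q}_{\mathcal{H}}$ flips each off-diagonal factor from $-\frac{1}{(k-1)!}$ to $+\frac{1}{(k-1)!}$, so the overall sign $(-1)^d$ in front of $\textnormal{Tr}_d(\mathcal{A}_{\mathcal{H}})$ becomes $+1$, while the diagonal block and all combinatorial weights remain unaffected.
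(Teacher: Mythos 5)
Your proposal is correct and follows exactly the route the paper intends: the paper proves Theorem 3.1 in detail and then asserts that the signless Laplacian version "can be obtained by the same method," which is precisely your argument of rerunning the decomposition $\mathcal{F}_d^{(1)}\cup\mathcal{F}_d^{(2)}\cup\mathcal{F}_d^{(3)}$ with $\mathcal{Q}_{\mathcal{H}}$ and noting that only the sign of the off-diagonal entries changes, turning $(-1)^d\textnormal{Tr}_d(\mathcal{A}_{\mathcal{H}})$ into $\textnormal{Tr}_d(\mathcal{A}_{\mathcal{H}})$. Your careful bookkeeping of which quantities depend only on $D(f)$ is in fact more explicit than what the paper provides.
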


	\begin{thm}
		\textnormal{Let $G$ be a graph with $n$ vertices and $m$ edges. Let $d_i$ denote the degree of vertex $i$ in $G$ ($i=1,\ldots,n$). For the $k$-power hypergraph $G^{(k)}$ of $G$, then}
		\begin{align*}
\textnormal{Tr}_d(\mathcal{Q}_{G^{(k)}})&=(k-1)^{N-1}\sum\limits_{i=1}^nd_i^d+dk^{k-2}(k-1)^{N-k}\Big(\sum\limits_{i-1}^nd_i^{d-k+1}+\sum\limits_{\{i,j\}\in E(G)}N_{d-k}(d_i,d_j)\Big)\\
&+(k-1)^{N-k}\big((k-1)^{k-1}+dk\big)(k-2)m,\ \mathrm{for}\ d=k+1,\ldots,2k-1,\\
\textnormal{Tr}_{2k}(\mathcal{Q}_{G^{(k)}})&=(k-1)^{N-1}\sum\limits_{i=1}^nd_i^{2k}+2k^{k-1}(k-1)^{N-k}\Big(\sum\limits_{i-1}^nd_i^{k+1}+\sum\limits_{\{i,j\}\in E(G)}N_{k}(d_i,d_j)\Big)\\
&+k^{2k-3}(k-1)^{N-2k+1}\sum\limits_{i=1}^nd_i^2+qm,
		\end{align*}
		\textnormal{where} $N=n+m(k-2)$, $N_{s}(d_i,d_j)=\sum\nolimits_{\begin{subarray}{c} 1\le c_i+c_j\le s \\ 0\le c_i,c_j<s \end{subarray}}d_i^{c_i}d_j^{c_j}\big(s=1,\ldots,k\big)$, $q=(k-1)^{N-k}\big((k-1)^{k-1}(k-2)+2k^{k-1}(k-2)+k^{k-1}-2k^{2k-3}(k-1)^{1-k}\big)$.
	\end{thm}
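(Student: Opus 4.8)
The plan is to run the proof of Theorem 3.2 essentially verbatim, replacing $\mathcal{L}_{G^{(k)}}$ by the signless Laplacian tensor $\mathcal{Q}_{G^{(k)}}=\mathcal{D}_{G^{(k)}}+\mathcal{A}_{G^{(k)}}$ and invoking Theorem 3.3 in place of Theorem 3.1. Writing $\mathcal{G}=G^{(k)}$, I would first observe that all of the combinatorial data entering the formula depend only on the support of the tensor, which is the same for $\mathcal{L}_{\mathcal{G}}$ and $\mathcal{Q}_{\mathcal{G}}$: the partition $\mathcal{F}_d=\mathcal{F}_d^{(1)}\cup\mathcal{F}_d^{(2)}\cup\mathcal{F}_d^{(3)}$, the multi-digraphs $D(f)$ and hence $\tau(f)$ and the outdegree products $\prod_{v\in V(f)}d^{+}(v)$, the counts of tuples sharing a common arc multi-set, and the identifications $\mathcal{V}_d(\mathcal{G})=\emptyset$ for $k\nmid d$ and $\mathcal{V}_k(\mathcal{G})=\{ke:e\in E(\mathcal{G})\}$ are all unchanged. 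In particular, since Lemma 3.1 bounds the number of edges of the induced Veblen multi-subgraph by $d-1<2k$ and the degree condition forces this number to be a multiple of $k$, only the $z=k$ term survives for $d=k+1,\ldots,2k$, exactly as in Theorem 3.2.

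The single point of divergence is the entry-product $\pi_f$. For $f\in\mathcal{F}_d(ke)$ written as $((i_1\beta_1)^{c_1},i_1\alpha_1,\ldots,(i_k\beta_k)^{c_k},i_k\alpha_k)$ there are exactly $k$ off-diagonal factors $q_{i_j\alpha_j}=+\tfrac{1}{(k-1)!}$ and $\sum_j c_j=d-k$ diagonal factors $q_{i_j\beta_j}=d_{i_j}$, whence
$$\pi_f(\mathcal{Q}_{\mathcal{G}})=\Big(\tfrac{1}{(k-1)!}\Big)^{k}\prod_{j=1}^{k}d_{i_j}^{c_j}=(-1)^{k}\,\pi_f(\mathcal{L}_{\mathcal{G}}).$$
Because every digraph-counting factor is literally the one already computed in Theorem 3.2, running the same Case 1 / Case 2 split produces the Theorem 3.2 Veblen sum with its leading $(-1)^{k}$ stripped off; that is, the coefficient $(-1)^{k}dk^{k-2}(k-1)^{N-k}$ is replaced by $dk^{k-2}(k-1)^{N-k}$.

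For the second term of Theorem 3.3 I would use that the adjacency contribution is identical in the two theorems for every $d\in\{k+1,\ldots,2k\}$: it vanishes for $d\le 2k-1$ by Eq. (2.6), and at $d=2k$ the signs $(-1)^{2k}$ (Laplacian) and $+1$ (signless) coincide, so the expression of Lemma 2.3, Eq. (2.7), enters unchanged. Consequently the term $k^{2k-3}(k-1)^{N-2k+1}\sum_{i=1}^{n}d_i^{2}$ and the sign-independent pieces of the edge-coefficient, namely $(k-1)^{k-1}(k-2)$, $k^{k-1}$ and $-2k^{2k-3}(k-1)^{1-k}$, are exactly as in $\ell$. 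Collecting the three terms, reducing the hyperedge sum to the graph-edge sum through the cored-vertex degrees $d_{e,l}=1$ (which again yields the function $N_{d-k}(d_i,d_j)$ and the $m(k-2)$ contributions of the degree-$1$ vertices), and substituting Eqs. (2.6) and (2.7), gives the stated formulas, with the Veblen sign $(-1)^{k}$ and the corresponding $(-1)^{k}2k^{k-1}(k-2)$ term of $\ell$ replaced by $+1$, i.e. with $q$ in place of $\ell$.

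The computation is essentially bookkeeping, so the step I would treat most carefully — and the only place an error could hide — is verifying that every sign I flip relative to Theorem 3.2 genuinely originates from an odd count of off-diagonal entries. This is precisely the factor $(-1)^{k}$ above, coming from the $k$ edge-type tuples in each $f\in\mathcal{F}_d(ke)$, together with the $(-1)^{d}$ on $\textnormal{Tr}_d(\mathcal{A}_{\mathcal{G}})$ in the general formula. Since the diagonal (degree) factors and all digraph-counting quantities carry no sign, no other $(-1)$ can occur, and the reassembly of the edge-coefficient into $q$ follows immediately.
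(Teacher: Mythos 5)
Your proposal is correct and is exactly the route the paper intends: the paper gives no separate proof of this theorem, stating only that it follows from the proofs of Theorems 3.1 and 3.2 ``by the same method,'' and your write-up supplies precisely that method --- all digraph-counting quantities depend only on the tensor's support, the sole change is the sign of the $k$ off-diagonal factors in each $f\in\mathcal{F}_d(ke)$ (giving $\pi_f(\mathcal{Q}_{\mathcal{G}})=(-1)^k\pi_f(\mathcal{L}_{\mathcal{G}})$) and the $(-1)^d$ on the adjacency trace, which vanishes for $k+1\le d\le 2k-1$ and is $+1$ at $d=2k$. Your sign bookkeeping matches the stated replacement of $(-1)^k dk^{k-2}$ by $dk^{k-2}$ and of $\ell$ by $q$.
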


And the signless Laplacian spectral moments of $k$-power hypergraph $G^{(k)}$ can also be represented by Zagreb indices of $G$.

Next, we introduce some concepts for the high-order (signless) Laplacian spectrum of graphs. For a graph $G$ and an integer $k\ge2$, the (signless) Laplacian spectrum of $G^{(k)}$ is called the \textit{$k$-th order (signless) Laplacian spectrum} of $G$. A graph $G$ is said to be determined by its high-order (signless) Laplacian spectrum, if there does not exist other non-isomorphic graph $H$ such that $H$ has the same $k$-th order (signless) Laplacian spectrum as $G$ for all $k\ge2$. And we give the following examples to show that some (signless) Laplacian cospectral graphs can be determined by their high-oeder (signless) Laplacian spectrum.

\begin{figure}[htbp]
		\centering
		\includegraphics[scale=0.6]{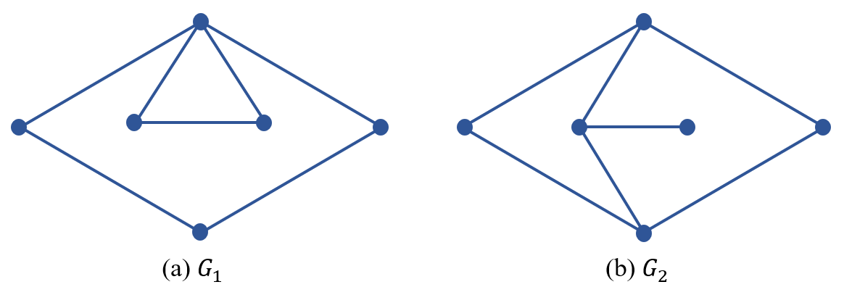}
		\caption{non-isomorphic Laplacian cospectral graph}
		\label{fig:1}
\end{figure}
	
\begin{figure}[htbp]
		\centering
		\includegraphics[scale=0.6]{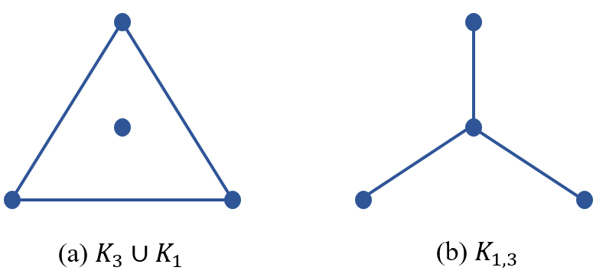}
		\caption{non-isomorphic signless Laplacian cospectral graph}
		\label{fig:2}
\end{figure}

\begin{remark}
\textnormal{The graphs shown in Figure 1 are non-isomorphic Laplacian cospectral graph. By the $3$-th order Laplacian spectral moments of $3$-power hypergraphs, we have $\textnormal{Tr}_{3}(\mathcal{L}_{(G_1)^{(3)}})\ne\textnormal{Tr}_{3}(\mathcal{L}_{(G_2)^{(3)}})$, then $(G_1)^{(3)}$ and $(G_2)^{(3)}$ have different Laplacian spectrum. So $G_1$ and $G_2$ can be distinguished by their high-order Laplacian spectrum.}

\textnormal{The graphs shown in Figure 1 are non-isomorphic signless Laplacian cospectral graph. By the $3$-th order signless Laplacian spectral moments of $3$-power hypergraphs, we have $\textnormal{Tr}_{3}(\mathcal{Q}_{(K_3\cup K_1)^{(3)}})\ne\textnormal{Tr}_{3}(\mathcal{Q}_{(K_{1,3})^{(3)}})$, then $(K_3\cup K_1)^{(3)}$ and $(K_{1,3})^{(3)}$ have different signless Laplacian spectrum. So $K_3\cup K_1$ and $K_{1,3}$ can be distinguished by their high-order signless Laplacian spectrum.}
\end{remark}

\vspace{3mm}

\noindent
\textbf{Acknowledgements}
\vspace{3mm}
\noindent

This work is supported by the National Natural Science Foundation of China (No.
11801115, No. 12071097, No. 12042103, No. 12242105 and No. 12371344), the Natural Science Foundation of the
Heilongjiang Province (No. QC2018002) and the Fundamental Research Funds for
the Central Universities.

\section*{References}
\bibliographystyle{unsrt}
\bibliography{spbib}
\end{spacing}
\end{document}